\newcommand{\ve}{\varepsilon}
\renewcommand{\sc}{\textsc}
\newcommand{\MA}{Monge-Amp\`{e}re }
\newcommand{\Th}{\mathcal{T}_h}
\newtheorem{Theorem}{Theorem}[section]
\newtheorem{Lemma}[Theorem]{Lemma}
\theoremstyle{definition}
\newtheorem{Definition}[Theorem]{Definition}
\newtheorem{remark}[Theorem]{Remark}
\numberwithin{equation}{section}
\def\l({\left(}
\def\r){\right)}
\def\uve{u_{\varepsilon}}
\def\vve{v_{\varepsilon}}
\def\Uve{\mathbf{U}_{\varepsilon}}
\def\Vve{\mathbf{V}_{\varepsilon}}
\def\Nh{\mathcal{N}_h}
\def\Nhi{\mathcal{N}_h^0}
\def\Nhb{\mathcal{N}_h^b}
\def\sdd{\nabla^2_{\delta_m}} %% Second Difference Delta
\def\sddp{\nabla^{2,+}_{\delta_m}} %% Second Difference Delta Plus
\def\sddm{\nabla^{2,-}_{\delta_m}} %% Second Difference Delta Minus
\def\sdda{\nabla^2_{\delta_a}} %%  Accurate Second Difference Delta
\def\sddap{\nabla^{2,+}_{\delta_a}} %% Accurat Second Difference Delta Plus
\def\sddam{\nabla^{2,-}_{\delta_a}} %% Accurate Second Difference Delta Minus
\def\Vperp{\mathbb{S}^{\perp}}
\def\Vperptm{\mathbb{S}^{\perp}_{\theta_m}}
\def\Vperpta{\mathbb{S}^{\perp}_{\theta_a}}
\def\St{\mathbb S_{\theta}}
\def\Stm{\mathbb S_{\theta_m}}
\def\Sta{\mathbb S_{\theta_a}}
\def\Vh{\mathbb{V}_h}
\def\Vth{\mathbb{V}_{2h}}
\def\Tem{T_{\varepsilon,m}}
\def\Tea{T_{\varepsilon,a}}
\def\Tef{T_{\varepsilon,f}}
\def\da{\delta_a}
\def\dm{\delta_m}
\def\ta{\theta_a}
\def\tm{\theta_m}
\def\Cka{C^{2+k,\alpha}}
\def\Wti{W^2_{\infty}}
\def\Bhi{B_i}
\def\interp{\mathcal I_h}
\def\interpo{\interp^1}
\def\interpt{\mathcal{I}_{2h}^2}
\def\ve{\varepsilon}
\def\bv{\mathbf{v}}
\definecolor{red}{rgb}{1,0,0}
\definecolor{blue}{rgb}{0,0,1}
\begin{document}
	%%%%%%%%%%%%%%%%%%%%%%%%%%%%%%%%%%%%%%%%%%%%%%%%%%%%%%%%%%%%%%%%%%%%%%%%%%%%%%
	
	\title[Convergent filtered scheme for the Monge-Amp\`ere Equation]
	{Convergent two-scale filtered scheme for the Monge-Amp\`ere Equation}
	
	%    Information for first author
	\author{R. H. Nochetto$^1$}
	%    Address of record for the research reported here
	\address{Department of Mathematics, University of Maryland, College Park, Maryland 20742}
	%    Current address
	%\curraddr{Department of Mathematics and Statistics,
	%Case Western Reserve University, Cleveland, Ohio 43403}
	\email{rhn@math.umd.edu}
	%    \thanks will become a 1st page footnote.
	\thanks{$^1$ Partially supported by the NSF Grant DMS -1411808, the
		Institute Henri Poincar\'e (Paris) and the Hausdorff Institute (Bonn).}
	
	%    Information for second author
	\author{D. Ntogkas$^2$}
	\address{Department of Mathematics, University of Maryland, College Park, Maryland 20742}
	\email{dimnt@math.umd.edu}
	\thanks{$^2$ Partially supported by the  NSF Grant DMS -1411808 and
		the 2016-2017 Patrick and Marguerite Sung Fellowship of the
		University of Maryland.}

	%    General info
	\begin{comment}
	\subjclass[2000]{Primary 54C40, 14E20; Secondary 46E25, 20C20}
	
	\date{\date{}}
	
	\dedicatory{This paper is dedicated to our advisors.}
	
	\keywords{Differential geometry, algebraic geometry}
	\end{comment}
	
	\begin{abstract}
		We propose an extension to our monotone and convergent method for the \MA equation in dimension $d \geq2$, that incorporates the idea of filtered schemes. The method combines our original monotone operator with a more accurate non-monotone modification, using an appropriately chosen filter. This results in a remarkable improvement of accuracy, but without sacrificing the convergence to the unique viscosity solution.

	\end{abstract}
	
	\maketitle
	
	\begin{comment}

	\section*{This is an unnumbered first-level section head}
	This is an example of an unnumbered first-level heading.
	
	%% The correct journal style for \specialsection is all uppercase; a known bug
	%% in amsart.cls prevents this, so input must be uppercase until it is fixed.
	%\specialsection*{This is a Special Section Head}
	\specialsection*{THIS IS A SPECIAL SECTION HEAD}
	This is an example of a special section head%
	%%%%%%%%%%%%%%%%%%%%%%%%%%%%%%%%%%%%%%%%%%%%%%%%%%%%%%%%%%%%%%%%%%%%%%%%
	\footnote{Here is an example of a footnote. Notice that this footnote
	text is running on so that it can stand as an example of how a footnote
	with separate paragraphs should be written.
	\par
	And here is the beginning of the second paragraph.}%
	%%%%%%%%%%%%%%%%%%%%%%%%%%%%%%%%%%%%%%%%%%%%%%%%%%%%%%%%%%%%%%%%%%%%%%%%
	.
	\end{comment}
	
	%%%%%%%%%%%%%%%%%%%%%%%%%%%%%%%%%%%%%%%%%%%%%%%%%%%%%%%%%%%%%%%%%%%%%%%%%%%%%%
	\section{Introduction} \label{S:Introduction}
	%%%%%%%%%%%%%%%%%%%%%%%%%%%%%%%%%%%%%%%%%%%%%%%%%%%%%%%%%%%%%%%%%%%%%%%%%%%%%%
	We consider the Monge-Amp\`ere equation with Dirichlet boundary condition: 
	%Find a \textit{convex} $u$ such that 
	%\\
	\begin{equation} \label{E:MA}
	\left\{
	\begin{aligned}
	\det{D^2u}&=f  & {\rm in} \ &\Omega \subset \mathbb {R}^d,
	\\ u &=g & {\rm on} \ &\partial \Omega,
	\end{aligned}
	\right.
	\end{equation}
	%\\
	%\\
	where $\Omega$ is a uniformly
	convex domain and $f \geq 0$ and $g$ are uniformly continuous functions.
        We seek a
	\textit{convex} solution $u$ of \eqref{E:MA}, which is critical for \eqref{E:MA} to be elliptic and 
	have a unique viscosity solution \cite{Gut}. 
	
	The Monge-Amp\`ere equation has a wide spectrum of applications in optimal mass transport problems, geometry, nonlinear elasticity, optics and meteorology. These applications lead to an increasing interest in the investigation of efficient numerical methods. Existing methods for the Monge-Amp\`ere equation include the early work by Oliker and Prussner \cite{OlPr} 
	for space dimension $d=2$, the vanishing moment methods by Feng and Neilan \cite{FeNe1,FeNe2}, the penalty method of Brenner, Gudi, Neilan \cite{BrenNeil}, 
	least squares and augmented Lagrangian methods by Dean and Glowinski \cite{DeGl1, DeGl2, Gl} 
	and the finite difference methods proposed by Froese and Oberman \cite{FrOb1,FrOb2} and Benamou, Collino and Mirebeau \cite{BeCoMi, Mir}. Feng and Jensen \cite{FeJe} have also recently proposed a semi-Lagrangian method that relies on an equivalent Hamilton-Jacobi-Bellman formulation of the Monge-Amp\`ere Equation.
        Schemes in \cite{BeCoMi,FeJe,FrOb1,FrOb2,Mir} are closely related
          to ours and hinge on a wide stencil approach.

        In this work we extend our two-scale method from \cite{NoNtZh,NoNtZh2}, where we use continuous piecewise linear polynomials on a quasi-uniform mesh of size $h$ and an approximation of the determinant that hinges on a second coarser scale $\delta$, in order to solve the \MA equation numerically. In \cite{NoNtZh} we introduce the two-scale method and prove uniform convergence to the viscosity solution of \eqref{E:MA}, whereas in \cite{NoNtZh2} we derive rates of converges in $L^{\infty}$ for classical and viscosity solutions that belong to certain H\"older and Sobolev spaces. The idea of a filtered scheme that we employ here is motivated by the work of Froese and Oberman in \cite{FrOb2}, but follows a different approach; we refer to \cite{ObSal} and \cite{BoFalSa} for stationary and time depentent Hamilton-Jacobi equations. Instead of combining two different methods, we modify our monotone two-scale method into a more accurate, two-scale non-monotone version that still relies on the same variational formulation for the determinant and combine it with the original monotone operator through a filter function. 
	In order to computationally examine the performance of the scheme, we compare the $L^\infty$ error of the monotone, the accurate and the filtered schemes, using the two main examples from \cite{NoNtZh}. We observe that the filtered operator inherits the improved errors from the accurate operator, but allows the monotone operator to dominate the calculations whenever there is a discontinuity of the Hessian. We investigate this behavior and conclude with some computational observations about the scheme. We prove convergence to the viscosity solution of \eqref{E:MA}.
	
	%--------------------------------------------------------------------------------
	\subsection{Our contribution}
	%--------------------------------------------------------------------------------
	
	As in \cite{FrOb1,NoNtZh} our method hinges on the following
	formula for the determinant of the positive semi-definite Hessian $D^2w$ of a smooth convex function $w$:
	\begin{equation}\label{E:Det}
	\det{D^2w}(x) = \min\limits_{\boldsymbol{v} \in \Vperp} \prod_{j=1}^d v_j^TD^2w(x) \ v_j ,
	\end{equation}
	where $\Vperp$ is the set of all $d-$orthonormal bases $\boldsymbol{v}=(v_j)_{j=1}^d, \ v_j \in \mathbb{R}^d$. The minimum in \eqref{E:Det} is achieved by the eigenvectors of $D^2w(x)$ and is equal to the product of the respective eigenvalues. We can discretize the above formula in various ways, employing different polynomial spaces and approximations for the directional derivatives given by $v_j^T D^2wv_j$. These choices lead to schemes with different theoretical properties and levels of accuracy. We first briefly recall the discretization used in \cite{NoNtZh,NoNtZh2} and then introduce a more accurate approach. Combining the two leads to the main contribution of this work, which we call the filtered scheme, due to the use of a filter function that allows us to appropriately combine the two discretizations.
	
	\vspace{0.2cm}
	\textbf{Monotone Operator \cite{NoNtZh,NoNtZh2}:}
	We discretize the domain $\Omega$ by a shape regular and
	quasi-uniform mesh $\Th^1$ with spacing $h$, the fine scale, and
	construct a space $\Vh^1$ of continuous piecewise linear
	functions over $\Th^1$. The superscript $1$ of $\Vh^1$ indicates the use of
        linear polynomials whereas that of $\Th^1$ entails the use of straight (affine equivalent) simplices.  We denote by $\Omega_h$ the computational domain, namely the union of the elements. We also denote by $\mathcal{N}_h$ the nodes of
	$\mathcal{T}_h$, and by
        \[
        \mathcal{N}_h^b := \{x_i \in \mathcal{N}_h: x_i \in \partial \Omega_h\},
        \quad
	\mathcal{N}_h^0 := \mathcal{N}_h \setminus \mathcal{N}_h^b
        \]
	the boundary and interior nodes, respectively. We require that $\Nhb \subset \partial \Omega$, which in view of the convexity of $\Omega$ implies that $\Omega_h$ is also convex and $\Omega_h \subset \Omega$. The second and coarser scale, which from now on we call $\dm$ whenever we refer to the monotone operator, is the length of directions we use to approximate second directional derivatives by central second order differences:
\begin{equation} \label{E:2Sc2Dif}
	\sdd  w (x;v) := \frac{ w(x+\dm v) -2w(x) +w(x-\dm v) }{ \dm^2}
	\quad
	\text{and}
	\quad
	|v| = 1 ,
\end{equation}
	for any $w\in C^0(\overline{\Omega})$. Let $\varepsilon = (h,\dm,\tm)$ represent the two scales and a third parameter $\tm$ that is utilized to discretize $\Vperp$ with precision $\tm$. We ask that for any $v$ in the unit sphere  $\mathbb S$, there
	exists $v^{\theta_m}$ that belongs in our discrete approximate set $\Stm$ such that
	$$
	|v-v^{\theta_m}| \leq \theta_m.
	$$
	Likewise, we define the finite set $\Vperptm$: for any
	$\mathbf v^{\tm} = (v_j^{\theta_m})_{j=1}^d \in \Vperptm, \ v_j^{\theta_m} \in \Stm$ and there exists
	$\mathbf v = (v_j)_{j=1}^d \in \Vperp$
	such that $|v_j - v_j^{\theta_m}| \leq \tm $ for all $1 \leq j \leq d$
	and conversely. We can now define the discrete monotone operator to be
	\begin{equation} \label{E:2ScOp}
	T_{\varepsilon,m}[w](x_i) :=\min\limits_{\mathbf{v} \in \Vperptm} \left( \prod_{j=1}^d \sddp w(x_i;v_j) - \sum_{j=1}^d \sddm w(x_i;v_j) \right),
	\end{equation}
	where $\sddp$ and $\sddm$ denote the positive and negative parts of $\sdd$ respectively and $x_i \in \Nhi$. The discrete solution $\uve \in \Vh^1$ satisfies 
	$$
	T_{\varepsilon,m}[\uve](x_i) = f(x_i) \quad \forall x_i \in \Nhi, \quad \uve(x_i) = g(x_i) \quad \forall x_i \in \Nhb.
	$$

	In \cite{NoNtZh} we prove that this discretization of \eqref{E:Det} is monotone and consistent
	and that $\uve$ converges uniformly in $\Omega$ to the unique viscosity solution of \eqref{E:MA}. In \cite{NoNtZh2} we derive rates of convergence in $L^{\infty}(\Omega_h)$ for classical solutions in H\"older and Sobolev spaces and $f>0$ as well as for some special cases of viscosity solutions and $f \geq 0$. Our numerical experiments of \cite{NoNtZh} indicate linear convergence rates, which is rigorously proven in \cite{LiNo} for classical solutions. Therefore, a linear rate is an accuracy barrier for two-scale monotone schemes with piecewise linear elements. A viable way to reduce this error is to increase the polynomial degree, which, given the two-scale nature of our scheme, would require higher order approximation of second directional derivatives. This pair of corrections leads us to introduce what we call the accurate operator.
	
	\vspace{0.2cm}
	\textbf{Accurate Operator:} This time we use quadratic polynomials in order to achieve a better interpolation error and a more accurate discretization of second directional derivatives in order to decrease the truncation error of the operator. To this end, we introduce again two scales $h$ and $\da$, where $\da \ne \dm$ is the coarse scale corresponding to the length of directions used for accurate discretization of second derivatives. We define the space $\Vh^2$ of continuous, piecewise quadratic functions and, in order to maximize the effect of polynomial degree, we employ isoparametric finite elements \cite{BrenScott,Ciarlet,Walker}. We assume that our domain $\Omega$ is piecewise uniformly convex and piecewise $C^{1,1}$, so that we can guarantee the existence of invertible and quadratic maps that transform the master element into elements with curved sides connecting boundary nodes $\Nhb$ \cite{Ciarlet}. We call the resulting mesh $\mathcal{T}_{h}^2$, the superscript indicating quadratic isoparametric mappings for boundary elements. We also employ a more accurate approximation of the second directional derivatives that relies on five, rather than
	three, point stencils. Consequently, second differences for $\uve \in \Vh^2$ are now given by
	\begin{equation}\label{E:SecDifAc}
\footnotesize
	\sdda  \uve (x_i;v) := \frac{ -\uve(x_i+\da v) + 16\uve(x_i+\frac{\da}{2} v)-30 \uve(x_i) 
		+16\uve(x_i-\frac{\da}{2} v) -\uve(x_i-\da v)  }{ 3\da^2},
	\end{equation}
	\normalsize
	where $x_i \in \Nhi$ and $v \in \Sta$. The symbol $\Sta$ indicates that we use a different angle discretization parameter $\ta$ for the accurate operator. 
	The accurate scheme then becomes: We seek $\uve \in \Vh^2$ such that $\uve(x_i)=g(x_i)$ for
	$x_i \in \mathcal{N}_h^b$ and for $x_i \in \mathcal{N}_h^0$
	\begin{equation} \label{E:AccOp}
	\Tea[\uve](x_i):=\min\limits_{\mathbf{v} \in \Vperpta} \left( \prod_{j=1}^d \sddap \uve(x_i;v_j) - \sum_{j=1}^d \sddam \uve (x_i;v_j) \right) = f(x_i),
	\end{equation}
	We observe that this discretization is no longer monotone, since a change in $\uve$ at a certain $x_j$ could affect the second difference at another node $x_i$ in two possible ways. It can either decrease or increase it, depending on whether it affects the behavior of $-\uve(x _i\pm\da v)$ or $\uve(x_i\pm \frac{\da}{2}v)$, respectively. We also note that $\Tea$ is defined on a space of piecewise quadratic functions, which means that the behavior at nodes does not translate monotonically to the behavior inside simplices. As a result a method relying only on this discretization cannot be proven to converge to viscosity solutions of \eqref{E:MA}.
	This is the motivation behind the use of a filtered scheme, along the lines of \cite{FrOb2}.

	\vspace{0.2cm}
	\textbf{Filtered Scheme:} The idea is to use of a filter function that combines the accurate and 
	the monotone operator and guarantees that the monotone operator will be used if the accurate 
	operator fails, due to the lack of monotonicity. This allows for a notion of an ``almost monotone" 
	operator that is flexible enough to deliver better accuracy for each fixed mesh size. We introduce the scheme here briefly and expand on its theoretical properties later.

        	\FloatBarrier
\begin{figure}[!htb]
	\includegraphics[scale=0.25]{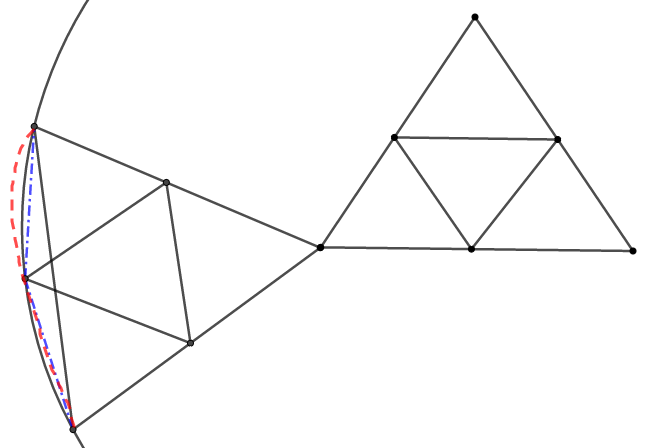}
	\caption{\small Illustration of the refinement close and away from the boundary for $d=2$. In the interior we create four new triangles upon each refinement, by connecting the midpoints of the coarser cell. The space $\Vh^1$ corresponding to $\mathcal{T}_h^1$ and defined over these four triangles shares the same nodal values as the space $\Vth^2$ corresponding to $\mathcal{T}_{2h}^2$ and defined over the original triangle. On boundary elements, the boundary point that is used in the construction and as a nodal value of the isoparametric element in $\mathcal{T}_{2h}^2$ for $\Vth^2$ becomes the new boundary node for $\Omega_h^1 \supset \Omega_{2h}^1$ that is used for the four new triangles of $\mathcal{T}_h^1$. The blue ``$-\cdot-\cdot-$'' dashed line corresponds to the new edges introduced after the refinement. The red ``$- - -$" dashed line illustrates the curved edge of the curved element that is isoparametric to the original triangle.
		\label{F:Domain} }
\end{figure}
\FloatBarrier

We start with the two meshes and function spaces used. Let $\mathcal{T}_h^1$ be a shape regular and quasi-uniform mesh of size $h$ and $\Vh^1$ be the corresponding space of continuous piecewise linear elements. Let $\mathcal{T}_{2h}^2$ be an isoparametric mesh of size $2h$ with same nodes as $\mathcal{T}_h^1$ and $\Vth^2$ be the corresponding space of continuous isoparametric piecewise quadratic elements; see Figure \ref{F:Domain} for $d=2$. An important consequence of this two-grid approach is that functions in $\Vh^1$ and $\Vh^2$ have degrees of freedom at the same nodes, including mid-points on the curvilinear boundary of $\Omega_{2h}^2$.

We now exploit this structure as follows. Let $\Uve=\left\{U_\ve^i\right\}_i\in\mathbb{R}^N$ be a grid function where $N$ is the number of nodes of either $\Vh^1$ or $\Vth^2$. We define two functions $\uve^1 \in \Vh^1$ and $\uve^2 \in \Vth^2$ with nodal values dictated by $\Uve$
\[
\uve^1 (x_i) = \uve^2 (x_i) = U_\ve^i
\quad\forall \, x_i\in\Nh,
\]
and compare them via a filter function $F$; see \Cref{S:FilterChoice} for an explicit definition. We thus seek $\Uve\in\mathbb{R}^N$ such that $U_\ve^i =g(x_i)$ for all $x_i \in \Nhb$ and for all $x_i \in \Nhi$
\begin{equation} \label{E:FiltOp}
\Tef [\Uve](x_i):= \Tem[\uve^1](x_i) + \tau F\left( \frac{\Tea[\uve^2](x_i)-\Tem[\uve^1](x_i)}{\tau} \right) = f(x_i).
\end{equation}
The filter function $F$ is required to be compactly supported and continuous (hence uniformly bounded), as well as equal to the identity close to the origin. Therefore, the difference of operators $\Tea[\uve^2](x_i)-\Tem[\uve^1](x_i)$ relative to the filter scale $\tau= \tau( \ve)$ is a decisive factor for the performance of the scheme. We observe that $\tau( \ve)$ depends on the scales $\ve = (h,\da,\dm,\ta,\tm)$ of the accurate and monotone operators and, since they in turn depend ultimately on $h$, it is important to realize that $\tau \to 0$ as $h \to 0$. We later provide some insight, based on heuristics and experimental evidence, on how to choose $\tau$. We now emphasize here the two main properties of $F$ that we wish to exploit:

\begin{enumerate}[$\bullet$]
	
\item \textit{Minimize the risk:} The accurate operator \eqref{E:AccOp} exhibits a smaller consistency error than the monotone operator \eqref{E:2ScOp} (see \Cref{L:FullConsistencyMon} (consistency of $\Tem[\interpo u]$) and \Cref{L:FullConsistencyAc} (consistency of $\Tea[\interpt u]$)), but the improved accuracy comes at the cost of lack of monotonicity.  Therefore, given the importance of monotonicity for convergence to viscosity solutions, the solution of \eqref{E:AccOp} cannot be guaranteed to converge. This is especially relevant when the right-hand side $f$ degenerates and a classical solution of \eqref{E:MA} might not exist. Since lack of monotonicity could in principle lead to the failure of the accurate operator, the filter $F$ examines the quantity $\Tea[\uve^2](x_i)-\Tem[\uve^1](x_i)$ relative to the filter scale $\tau$. If this diference is smaller than $\tau$, thus signaling that $\Tea[\uve^2](x_i)$ is well behaved, then $F$ is the identity and $\Tef [\Uve](x_i)=\Tea[\uve^2](x_i)$ as desired. If instead this difference is larger than $\tau$, thereby indicating erratic behavior of $\Tea[\uve^2](x_i)$, then $F$ vanishes and $\Tef [\Uve](x_i)=\Tem[\uve^1](x_i)$ signifies that the monotone operator dominates and yields convergence. We note, however, that this is a rather simplistic approach that could lead to using the monotone operator even in cases where the accurate operator is much better. We explore and discuss this further in \Cref{S:FilteredNumerics}.

\item \textit{Almost monotonicity:} A key feature of $F$ is uniform boundedness, namely $|F(s)| \leq 1$ for all $s \in \mathbb{R}$. This leads to \Cref{L:AlmostMon} (almost monotonicity), which, in turn, is critical to prove existence of a solution of \eqref{E:FiltOp} and its convergence to the visicosity solution of \eqref{E:MA}. Proving such results is an essential component of this work, which entails suitable definitions of the filter $F$ and of the filter operator $T_{\ve,f}$ in \eqref{E:FiltOp}. We discuss this in \Cref{S:FilterChoice} including the possible degeneracy of the right-hand side $f$. We provide explicit definitions of $F$.
\end{enumerate}

In order to explore the performance of the accurate operator and the filtered scheme, a substantial part of our presentation is devoted to numerical experiments. We first verify computationally the increased accuracy of the higher-order operator and provide some computational remarks. We then repeat our experiments for the filtered scheme and obtain the anticipated results: the scheme has a smaller error compared to the monotone operator and appears to detect singularities when the solution is not smooth. In fact, we observe that in the case of singularities the filtered scheme performs even better than the accurate operator. We explore this behavior and examine the interplay between the monotone and the accurate operator in the non-smooth case, in order to elucidate the role of filter $F$. Since this is a significant component of this work, we present the numerical examples first in Sections \ref{S:AccurateNumerics} and \ref{S:FilteredNumerics}. We conclude with a discussion of consistency of the monotone and accurate operators in Section \ref{S:Consistency}, as well as proofs of existence and convergence of solutions to the filtered scheme in Section \ref{S:FilteredScheme}.

%%%%%%%%%%%%%%%%%%%%%%%%%%%%%%%%%%%%%%%%%%%%%%%%%%%%%%%%%%%%%%%%%%%%%%%%%%%%%%%%%%
\section{Definition of Filter Function} \label{S:FilterChoice}
%%%%%%%%%%%%%%%%%%%%%%%%%%%%%%%%%%%%%%%%%%%%%%%%%%%%%%%%%%%%%%%%%%%%%%%%%%%%%%%%%%

We start with explicit definitions for the filter function $F$, including potential degeneracy of the right-hand side $f$. We recall that $\dm\ne\da$ are the coarse scales in the definition of the operators $T_{\ve,m}$ and $T_{\ve,a}$. We introduce the simplifying notation
			\begin{equation} \label{E:Argument}
			A[\Uve](x_i) := \Tea[\uve^2](x_i)-\Tem[\uve^1](x_i).
			\end{equation}
for the argument of $F$ in \eqref{E:FiltOp}. 
		\FloatBarrier
We choose the following continuous and uniformly bounded filter function
		\begin{equation}\label{E:Filter}
		F_\sigma(s):= 
		\begin{cases*}
		s,  \quad |s| \leq 1  \\
		0,  \quad |s| \geq 1+ \sigma \\
		-\frac{1}{\sigma} \ s + \frac{1+\sigma}{\sigma}, \quad 1 < s <1+\sigma \\
		-\frac{1}{\sigma} \ s - \frac{1+\sigma}{\sigma}, \quad -1-\sigma < s < -1.
		\end{cases*}
		\end{equation}
The parameter $\sigma$ encodes a smooth transition of $F_\sigma$ to zero; its choice and use are further discussed in \Cref{S:FilteredNumerics}. Function $F_\sigma$ satisfies the desirable properties mentioned in Section \ref{S:Introduction}, i.e. it is uniformly bounded by one and coincides with the identity in the interval $[-1,1]$.

However, one important feature of $\Tem$ reported in \cite{NoNtZh} is that it can guarantee the discrete convexity of the discrete solution to $T_{\ve,m}[u_\ve^1](x_i)=f(x_i)$, i.e.
\[
\sdd \uve^1(x_i;v_j) \geq 0
\quad\forall \, x_i \in \Nhi, ~v_j \in \Stm,
\]
provided $f\ge0$. This may not be the case with \eqref{E:Filter} when the right-hand side $f$ touches zero. Although it is possible to deal with this issue asymptotically, it is also desirable to mimic the properties of the continuous problem, which justifies preserving the discrete convexity of discrete solutions.

We explain now why \eqref{E:Filter} may not guarantee discrete convexity and suggest a way to enforce it.
We observe that for every grid function $\mathbf{U}_h$ with corresponding functions $u_h^1 \in \Vh^1$ and $u_h^2 \in \Vth^2$ with nodal values dictated by $\mathbf{U}_h$, and for all $x_i \in \Nhi$ there exists $|k_i|\leq 1$ depending on $\Tea[u_h^2](x_i)$ and $\Tem[u_h^1](x_i)$ such that
		\begin{equation}\label{E:ThreeCasesTf}
		\Tef[\Uve] (x_i) =
		\begin{cases}
		\Tem[u_h^1](x_i), &  |A[\mathbf{U}_h](x_i)| \geq (1+\sigma)\tau \\
		\Tem[u_h^1](x_i) - |k_i| \tau, &  -(1+\sigma)\tau < A[\mathbf{U}_h](x_i) \leq 0 \\
		\Tem[u_h^1](x_i) + |k_i| \tau, &  0 < A[\mathbf{U}_h](x_i) < (1+\sigma)\tau.
		\end{cases}
		\end{equation}
Suppose that $f(x_i)=0$ for some $x_i \in \Nhi$. If we want $\uve^1$ to be discretely convex at $x_i$, we need to make sure that $\Tem[\uve^1](x_i) \geq 0$. This property is also instrumental in \Cref{L:Existence} (existence and stability) to prove existence of a discrete solution using results from \cite{NoNtZh}. The above calculation shows that
		\begin{equation}\label{E:ThreeCasesTm}
		\Tem[\uve^1](x_i) = 
		\begin{cases}
		f(x_i) =0, &  |A[\Uve](x_i)| \geq (1+\sigma) \tau \\
		f(x_i) + |k_i| \tau \geq 0, & -(1+\sigma)\tau < A[\Uve](x_i) \leq 0 \\
		f(x_i) - |k_i| \tau \leq 0, &  0 < A[\Uve](x_i)(x_i) < (1+\sigma)\tau.
		\end{cases}
		\end{equation}
This reveals that in order to preserve $\Tem[\uve^1](x_i) \geq 0$ for all $x_i \in \Nhi$, we must exclude the third case. We thus introduce a non-symmetric modification of the filter function:
	\begin{equation} \label{E:NSFilter}
	\widetilde F_\sigma (s) := 
	\begin{cases}
	s, & -1 \leq s \leq 0 \\
	-\frac{1}{\sigma} \ s - \frac{1+\sigma}{\sigma}, &-1-\sigma \leq s \leq -1 \\
	0, & \text{otherwise.}
	\end{cases}
	\end{equation}

        From now on we make the convention that \eqref{E:Filter} is used in \eqref{E:FiltOp} whenever the right-hand side $f(x)\ge f_0>0$ for all $x\in\Omega$ whereas \eqref{E:NSFilter} is our choice provided $f$ touches zero. We emphasize that this decision depends on $f$ but not on the space location $x$. In both cases, we have
          \begin{equation}\label{E:discrete-convex}
          \Tem[\uve^1](x_i) \ge f(x_i) - |k_i|\tau \ge 0,
          \end{equation}
provided $\tau\le f_0$ in the first case and by construction in the degenerate case. Consequently, $\uve^1$ is discretely convex.

          In \Cref{L:Existence} (existence and stability) we show the existence of a discretely convex solution of \eqref{E:FiltOp}. The restriction $\tau \leq f_0$ is not stringent in practice because $\tau$ is some positive power of $h$ and thus tends to zero. On the other hand, choosing the non-symmetric filter $\widetilde F_\sigma$ destroys the symmetry of the resulting system and excludes parts of the domain where $\Tea$ could have still been used, i.e. whenever $f \geq \tau$. Consequently, we only employ \eqref{E:NSFilter} if necessary. In \Cref{S:FilteredNumerics} we test both \eqref{E:Filter} and  \eqref{E:NSFilter} on a smooth example with strictly positive $f$ and a $C^{1,1}$ example with vanishing $f$, respectively. We also explore briefly a space-dependent choice of filter for the degenerate case.

%%%%%%%%%%%%%%%%%%%%%%%%%%%%%%%%%%%%%%%%%%%%%%%%%%%%%%%%%%%%%%%%%%%%%%
\section{Numerical Experiments: Accurate Scheme} \label{S:AccurateNumerics}
%%%%%%%%%%%%%%%%%%%%%%%%%%%%%%%%%%%%%%%%%%%%%%%%%%%%%%%%%%%%%%%%%%%%%%
In this section we illustrate the improved performance of the accurate operator.

%---------------------------------------------------------------------
  \subsection{Comparison between $T_{\ve,m}$ and $T_{\ve,a}$} \label{S:mono-vs-acc}
%---------------------------------------------------------------------
We present in \Cref{Ta:Accurate} the $L^\infty$ error of the solution of \eqref{E:2ScOp} vs that of \eqref{E:AccOp} for the following two examples taken from \cite{NoNtZh} and defined on $\Omega = [0,1]^2$:
	
	\vskip0.2cm\noindent
	{\bf Smooth Hessian:} Let the exact solution $u$ and forcing $f$ be
	\[
	u(x) = e^{|x|^2/2},
	\quad
	f(x) = (1+|x|^2) e^{|x|^2}
	\quad\forall x\in\Omega.
	\]
	\medskip\noindent
	{\bf Discontinuous Hessian:} Let $x_0 = (0.5,0.5)$ and $u$ and $f$ be
	\[
	u(x) =  \frac{1}{2} \left(\max({|x-x_0|-0.2,0)}\right)^2, 
	\quad
	f(x) =\max{\left(1-\frac{0.2}{|x-x_0|},0\right)}
	\quad\forall x\in\Omega.
	\]

        Since $\Omega$ is polygonal, the computational domain $\Omega_h=\Omega$ and the isoparametric maps of $\mathcal{T}_{2h}^2$ for boundary elements are simply affine. This choice simplifies the numerics and allows us to compare with earlier experiments from \cite{NoNtZh}. We indeed compare $\|u-\uve^1\|_{L^\infty(\Omega)}$ and $\|u-\uve^2\|_{L^\infty(\Omega)}$ where $\uve^1\in\Vh^1$ solves \eqref{E:2ScOp} and $\uve^2\in\mathbb{V}_{2h}^2$ solves \eqref{E:AccOp}, whence the number of degrees of freedom is the same in both examples.

        For the smooth case, we observe that the accurate operator exhibits a significant improvement beyond one order of magnitude. Although there is no theoretical result to support this fact, it can be formally explained by the regularity of the solution and the higher order operator consistency error in \Cref{L:FullConsistencyAc} (consistency of $\Tea[\interpt u]$). It is worth noting that the accuracy improvement for the monotone operator for the smooth example exhibits saturation in the last refinement. Upon examining where the error is larger, we realize that it appears on the boundary layer that arises from the definition of $\Tem$. As shown in \cite[Theorem 5.3]{NoNtZh2}, this error does not obey operator consistency, but is instead bounded by $C \ \|u\|_{W^2_\infty(\Omega)} \ \dm$ through a barrier argument. %This is true for the above case too.
        
	\FloatBarrier
	\begin{table}[tbh]
		\begin{center}
			\begin{tabular}[t]{ | c | c | c | c | c |}
				%			\hline
				%			DoFs    & $P$: \ \# of points & $\Tem$ & $ \Tea$   & Newton steps \\
				%			\hline\hline
				%			N= 4225, $\ha=2^{-5}$	 &	  56   &   $2.8	 \ 10^{-3}$   	&  $2.24 \ 10^{-4}$			& 6  \\ 
				%			\hline 
				%			N=16641,  $\ha=2^{-6}$	  &    88 & $1.5 \ 10^{-3}$ 	 & 	$8.84 \ 10^{-5}$			&  6 \\
				%			\hline
				%			N=66049,  $\ha= 2^{-7}$	&	136	  &      $7.8 \ 10^{-4}$    & 	$4.26 \ 10^{-5}$			&  6 \\
				%			\hline
				%			N= 263169,  $\ha=2^{-8} $  	&	224	   &   $6.4 \ 10^{-4}$      & 	$2.42  \ 10^{-5}$			&   7  \\ 
				%			\hline
				\hline
				DoFs    & $P$: \ \# of points & $\Tem$ & $ \Tea$   & Newton steps \\
				\hline\hline
				N= 4225, $h=2^{-6}$	 &	  56   &   $2.8	 \ 10^{-3}$   	&  $1.91 \ 10^{-4}$			& 6  \\ 
				\hline 
				N=16641,  $h=2^{-7}$	  &    88 & $1.5 \ 10^{-3}$ 	 & 	$8.60 \ 10^{-5}$			&  5 \\
				\hline
				N=66049,  $h= 2^{-8}$	&	144	  &      $7.8 \ 10^{-4}$    & 	$4.17 \ 10^{-5}$			&  5 \\
				\hline
				N= 263169,  $h=2^{-9} $  	&	224	   &   $6.4 \ 10^{-4}$      & 	$2.42  \ 10^{-5}$			&   7  \\ 
				\hline
			\end{tabular}
		\end{center}
		\begin{center}
			\begin{tabular}[t]{ | c | c | c | c | c |}
				\hline
				DoFs      & $P$: \ \# of points & $\Tem$ & $ \Tea$ & Newton steps \\
				\hline\hline
				N= 4225, $h=2^{-6}$	    &	     40  &  $1.9 \ 10^{-3}$   &  $2.48 \ 10^{-4}$			& 9  \\ 
				\hline
				N=16641,  $h=2^{-7}$	    &	     56  &  $9.0 \ 10^{-4}$    & 	$1.51 \ 10^{-4}$			& 12 \\
				\hline
				N=66049,  $h= 2^{-8}$  &		72	 &     $5.7 \ 10^{-4}$       & 	$ 8.34 \ 10^{-5}$			&  14	 \\
				\hline
				N= 263169,  $h=2^{-9} $  &		96    &  $3.83 \ 10^{-4}$      & 	$5.31 \ 10^{-5}$	&   20 \\ 
				\hline
				%			\hline\hline
				%			N= 4225, $\ha=2^{-5}$	    &	     40  &  $1.9 \ 10^{-3}$   &  $$			& 9  \\ 
				%			\hline
				%			N=16641,  $\ha=2^{-6}$	    &	     56  &  $7.9 \ 10^{-4}$    & 	$$			& 12 \\
				%			\hline
				%			N=66049,  $\ha= 2^{-7}$  &		72	 &     $5.9 \ 10^{-4}$       & 	$ $			&  14	 \\
				%				\hline
				%			N= 263169,  $\ha=2^{-8} $  &		96    &  $4.12 \ 10^{-4}$      & 	$$	&   20 \\ 
			\end{tabular}
		\end{center}
		\caption{ \footnotesize $L^{\infty}$ error for monotone and accurate operators, $T_{\ve,m}$ and $T_{\ve,a}$, for the smooth Hessian (top) and the discontinuous Hessian (bottom).
			$P:$ number of points $x_i\pm\delta_a v_j,x_i\pm\frac{\delta_a}{2} v_j$
			used for $\Tea$ at each $x_i\in\Nhi$; $P=8(D-1)$, 
			where $D$ is the number of directions $v_j$
			in a quarter circle, as determined by the value of $\theta_a$.
			The operator $T_{\ve,a}$ has an accuracy of one to two orders higher than $T_{\ve,m}$ \cite{NoNtZh}.
			The number of Newton iterations corresponds to $T_{\ve,a}$.}
		\label{Ta:Accurate}
	\end{table}
 
For the example with discontinuous Hessian we observe again an accuracy improvement from  $\Tem$ to $\Tea$, despite the fact that the predicted error in \cite[Theorem 5.7]{NoNtZh2} for $T_{\ve,m}$ and a degenerate $f$ is determined by the dimension of the problem rather than the regularity of the solution. We also notice, in contrast to \cite{NoNtZh}, an increase in the number of Newton iterations with each refinement for $T_{\ve,a}$. This may be attributed to the lack of monotonicity of $\Tea$. 

%-------------------------------------------------------------------------
\subsection{Computational Remarks} \label{S:AcCompRe}
%-------------------------------------------------------------------------
We now explain implementation issues for the accurate method, which are in turn relevant for the filtered scheme.

\smallskip
\textbf{Sparsity:} The evaluation of $\sdda\uve^2(x_i;v_j)$ using \eqref{E:SecDifAc} requires about twice the number of points as $\sdd\uve^1(x_i;v_j)$ using \eqref{E:2Sc2Dif}, for each point $x_i \in \Nhi$ and direction $v_j \in \mathbb{S}_\theta$. This results in a sparsity pattern with a wider bandwidth and more non-zero elements, since for each extra point $x_i \pm \frac{\delta}{2}v_j$, we need to use the degrees of freedom of the simplex where $x_i\pm \frac{\delta}{2}v_j$ belong.

%To illustrate this, we present in \Cref{F:Sparsity} the sparsity pattern of the gradient matrix for both the monotone and the accurate operator for the same amount of degrees of freedom, in particular $h = 2^{-6}$ and $N = 4225$. 
%	\begin{figure}[!htb]
%			\includegraphics[scale=0.2]{sparsity_monotone_c11_filterl5.png} \quad 
%		\includegraphics[scale=0.2]{sparsity_accurate_c11_filterl5.png}
%		\caption{\small Sparsity of the Jacobian for $C^{1,1}$ example for $h=2^{-6}$ and $N=4225$: a) monotone operator, $44,186$ non-zero elements (left), b) accurate operator, $160,990$ non-zero elements (right).}
%		\label{F:Sparsity}
%	\end{figure}
	
\smallskip	
	\textbf{Solver:} The monotone operator $T_{\ve,m}$ in \cite{NoNtZh} was implemented using a direct solver for the linear system, i.e. Matlab's backslash operator. However, for very fine meshes which yield many directions an iterative method like conjugate gradient may be more appropriate. The situation is more critical for the accurate operator $T_{\ve,a}$ due to its worse sparsity pattern discussed above. This makes a direct solver a less favorable option for very fine meshes. Choosing a small tolerance for the conjugate gradient method seems to provide computational results similar to the direct solver, thus without sacrificing accuracy but gaining efficiency. The design of suitable preconditioners is essential, but remains an open issue.

	%%%%%%%%%%%%%%%%%%%%%%%%%%%%%%%%%%%%%%%%%%%%%%%%%%%%%%%%%%%%%%%
	\section{Numerical Experiments: Filtered Scheme} \label{S:FilteredNumerics}
	%%%%%%%%%%%%%%%%%%%%%%%%%%%%%%%%%%%%%%%%%%%%%%%%%%%%%%%%%%%%%%
	
	We now explore computationally the accuracy of the filtered scheme in \eqref{E:FiltOp}, which combines the monotone and accurate operators. In fact, we determine the \textit{active set} of the filtered scheme, which is the region where the monotone operator dominates. We start with a brief discussion about the choice of $\sigma$ in $F_\sigma$ and fix a value for our implementation.
	
%-----------------------------------------------------------------------
	\subsection{Choice of $\sigma$ in $F_\sigma$:}
%-----------------------------------------------------------------------
We observe that the function $F_\sigma$ in \eqref{E:Filter} is Lipschitz for any $\sigma>0$ whereas for $\sigma = 0$ is the discontinuous function 
	$$
	F_0(s):= 
	\begin{cases*}
	s,  \quad |s| \leq 1  \\
	0,  \quad |s| >1.
	\end{cases*}
	$$
        Since continuity of $F_\sigma$ is only used in \Cref{L:Existence} (existence and stability) in order to apply \cite[Lemma 3.1]{NoNtZh}, we can choose $\sigma$ as small as we want and then perform computations with decreasing values of $h$. In practice, we take $\sigma = 10^{-4}$, which leads to such a tiny window for the last two cases in \eqref{E:Filter} that they do not occur in practice. This is desirable because our goal is to give full control to $\Tea$ in regions of smoothness and to employ the monotone operator $T_{\ve,m}$ otherwise.

        The use of semi-smooth Newton for the range $1\le|s|\le1+\sigma$ is however questionable. For example, if $\sigma = 1$ and $-2 \leq \tau^{-1}\big(\Tea[\uve^1](x_i)-\Tem[\uve^2](x_i) \big) \leq -1$ in \eqref{E:Filter}, we obtain 
	$$
	\begin{aligned}
	\Tef[\Uve](x_i) = 2 \ \Tem[\uve^1](x_i) -\Tea[\uve^2](x_i) - 2 \tau.
	\end{aligned}
	$$
	This would result in the $i$-th row $\nabla \Tef[\Uve](x_i)$ of the Jacobian matrix to be
		$$
		\nabla \Tef[\Uve](x_i) = 2 \ \nabla \Tem[\uve^1](x_i) - \nabla \Tea [\uve^2](x_i),
		$$
		where $\nabla \Tem[\uve^1]$ and $\nabla \Tea[\uve^2]$ are the Jacobian matrices associated with the monotone and accurate operators. Both Jacobians behave well computationally, but there is no reason to expect that a matrix resulting from subtracting them will be non-singular. In fact, we observe computationally that the semi-smooth Newton becomes very slow and for very fine meshes it does not even converge. Concerns about the solvability of the Newton system are also raised  in \cite{FrOb2}, where the following approximation is advocated
	$$
	\nabla \Tef[\Uve](x_i) \approx 2 \ \nabla \Tem[\uve^1](x_i).
	$$

	We prefer, instead, to avoid these issues altogether by choosing a rather small value of $\sigma$.  For any fixed value of $\sigma$, we still employ a semi-smooth Newton iteration and treat all the corners of the filter $F_\sigma$ similarly to the min and max functions in \cite{NoNtZh}. Moreover, using $\sigma = 10^{-4}$ the likelihood of $|\Tea[\uve^2](x_i)-\Tem[\uve^1](x_i)| \in [1,1+\sigma]$
	is rather small and indeed it rarely occurs in practice. We see in \Cref{Ta:FilteredExp,Ta:FilteredC11} in section \ref{S:numer-exp} that our choice leads computationally to a similar amount of Newton iterations as for the accurate scheme in \Cref{Ta:Accurate}.

	\vspace{0.1cm}

%-------------------------------------------------------------------------------        
	\subsection{Numerical Experiments}\label{S:numer-exp}
%-------------------------------------------------------------------------------
	Before presenting our results in detail, using the same examples as in \Cref{S:AccurateNumerics}, we make two general observations.
	\begin{enumerate}[$\bullet$]
	\item \textbf{Choice of filter scale $\tau$:} We stress that the behavior of the scheme depends strongly on the 	choice of the filtered scale $\tau$, which must obey $\tau\to0$ as $h\to0$. A bigger $\tau$ allows the accurate operator to take control, while the monotone operator guarantees convergence when the accurate operator has a very large consistency error. On the other hand, smaller values of $\tau$ lead to the presence of the active set of nodes, where the monotone operator dominates. Since we measure the error in the $L^{\infty}$ norm, a large active set could prevent us from achieving better accuracy than in \cite{NoNtZh}. Although there is no obvious recipe for choosing $\tau$, the definition \eqref{E:FiltOp} of $\Tef$ indicates that, in order for the accurate operator to be active at a point $x_i$, we need $|\Tea[\uve^2](x_i)-\Tem[\uve^1](x_i)|\leq \tau$. Consequently, $\tau$ has to be greater than the truncation error of the monotone operator, because the active set may otherwise include nodes where the solution is smooth. We follow this approach to generate \Cref{Ta:FilteredExp,Ta:FilteredC11} and \Cref{F:Errors_Comparison}.

        \smallskip
        \item \textbf{Boundary Layer:} Our experiments reveal that the active set may contain nodes near $\partial \Omega$. This is due to the different boundary layer effect of each operator. In fact, the consistency error for the monotone operator is of order one because $\frac{\delta_m^2}{h^2}\approx 1$ (see \Cref{L:FullConsistencyMon} (consistency of $\Tem[\interpo u]$)), while for the accurate operator the order becomes $\frac{\delta_a^2}{h^2} \approx \frac{h^3}{h^2} \approx h$ (see \Cref{L:FullConsistencyAc} (consistency of $\Tea[\interpt u]$)).
	\end{enumerate}
        
	We now document the performance of the filtered scheme for the two examples of \Cref{S:AccurateNumerics} and investigate the effect of $\tau$ in the size and location of the active set. We compare the performance of the scheme with that of the monotone and accurate operators, and recall that $\Omega_{h}=\Omega$ for all $h>0$.
 
	\smallskip\noindent
	{\bf Experiment 1:  Smooth Hessian.}
	We start by illustrating the error estimates for the smooth example for $\tau = 6 e^2h$ in \Cref{Ta:FilteredExp}. This choice is motivated by the theoretical truncation error of $\Tem$ for the corresponding choice of $\dm$. For this example we use the original, symmetric, filter $F_\sigma$ with $\sigma = 10^{-4}$. This falls under the existence and convergence results of \Cref{S:FilteredScheme}, since $f(x)= (1+|x|^2)e^{|x|^2} \geq e > \tau$ in $\Omega$ for all values of $h$ that are used in \Cref{Ta:FilteredExp}. We observe a small active set, with relative size around $1\%-2\%$ for all refinements.
	% We start by presenting error estimates for the smooth example for $\tau = 4 e^2\hm$ in \Cref{Ta:FilteredExp}. This choice corresponds to the theoretical truncation error of $\Tem$ for the implemented choice of $\dm$. Although there is no clear recipe for the choice of $\tau$, the definition of $\Tef$ \eqref{E:FiltOp} indicates, that, in order for the accurate operator to be active at a point $x_i$, we need $|(\Tea[\uve^2]-\Tem[\uve^1])(x_i)|\leq \tau$. As a result, $\tau$ has to be greater than the truncation error of the monotone operator, since, otherwise, we may have an active set even in parts of the domain where the solution is smooth.For this example we use the original, symmetric, filter $F_\sigma$ with $\sigma = 10^{-4}$. This falls under the existence and convergence results of \Cref{S:FilteredScheme}, since $f(x)= (1+|x|^2)e^{|x|^2} \geq e > \tau$ in $\Omega$ for all values of $\hm$ that are used in \Cref{Ta:FilteredExp}. For this example we observe that he relative size of the active set remains around $1-2\%$ in each refinement.
	\FloatBarrier
	\begin{table}[h!]
		\begin{center}
			\begin{tabular}[t]{ | c | c | c | c | c | c |}
				%			\hline
				%			DoFs      & $P$:\ \# of points & $L_{\infty}-$error  & Newton steps  & Active Set \\
				%			\hline\hline
				%			N= 1089, $\hm=2^{-5}$	    &	     32     &  $1.01 \ 10^{-3}$			& 6   & 17  \\ 
				%			\hline
				%			N=4225,  $\hm=2^{-6}$	    &	     56     & 	$3.16 \ 10^{-4}$			& 6 & 57 \\
				%			\hline
				%			N=16641,  $\hm= 2^{-7}$  &		88	           & 	$ 1.20 \ 10^{-4}$			&  6 & 214	 \\
				%			\hline
				%			N= 66049,  $\hm=2^{-8} $  &		144           & 	$5.00 \ 10^{-5}$			&   7 & 918  \\ 
				%			\hline
				%			N= 263169,  $\hm=2^{-9} $  &		224           & 	$1.99 \ 10^{-5}$			&   8& 5035  \\ 
				%			\hline
				\hline
				$h$     & $\Tem$ & $\Tea$ & $\Tef$  & $\Tef$ Newton& Active Set \\
				\hline\hline
				$h=2^{-5}$	    &	 $5.4 \ 10^{-3}$   & $5.16 \ 10^{-4}$   &  $1.01 \ 10^{-3}$			& 6   & 17  \\ 
				\hline
				$h=2^{-6}$	    &	  $2.8	 \ 10^{-3}$   	&  $1.91 \ 10^{-4}$	     & 	$3.16 \ 10^{-4}$			& 6 & 57 \\
				\hline
				$h= 2^{-7}$  &	 $1.5 \ 10^{-3}$ 	 & 	$8.60 \ 10^{-5}$    & 	$ 1.20 \ 10^{-4}$			&  6 & 214	 \\
				\hline
				$h=2^{-8} $  &	 $7.8 \ 10^{-4}$    & 	$4.17 \ 10^{-5}$	 & 	$5.00 \ 10^{-5}$			&   7 & 918  \\ 
				\hline
				$h=2^{-9} $  & $6.4 \ 10^{-4}$      & 	$2.42  \ 10^{-5}$		 & 	$1.99 \ 10^{-5}$			&   8& 5035  \\ 
				\hline
			\end{tabular}
		\end{center}
		\FloatBarrier
		\vskip0.2cm
		\caption{\small Smooth Hessian, $\tau = 6e^2h$. The $L^\infty$ error for $\Tef$ is up to an order of magnitude smaller than $\Tem$. Active Set is the number of nodes where the low-accuracy operator $\Tem$ dominates. The amount of Newton iterations for the filtered scheme is also displayed.} 
		\label{Ta:FilteredExp}
	\end{table}
	\FloatBarrier
	\normalsize
	The active set for $\tau = 6 e^2h$ and $h = 2^{-8}$ is displayed in \Cref{F:ActiveSetExp}. We observe the aforementioned boundary layer effect, especially close to the upper-right corner, where $u$ and its derivatives are larger.

	\FloatBarrier
	\begin{figure}[!htb]
		\includegraphics[scale=0.27]{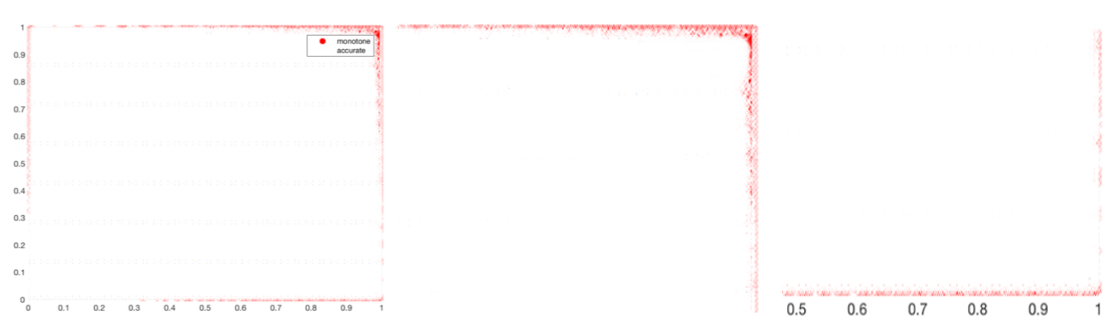}
		\caption{\small Active set for smooth example:  $\tau = 6 e^2  h$. The active set is concentrated on the upper corner and, less, on the lower and left side. We illustrate the upper corner and lower side in a zoom-in. }
		%		(left), b) $\tau = 1.5 e^2  \hm^{2/5}$ (right).}
		\label{F:ActiveSetExp}
	\end{figure}
	\FloatBarrier
	
	\smallskip\noindent
	{\bf Experiment 2: Discontinuous Hessian.}
	For the $C^{1,1}$ example of \Cref{S:mono-vs-acc} we choose $\tau = 0.62 \  h^{2/5}$, which is motivated by the theoretical consistency error of the monotone operator for $\dm = 2.44 \ h$. We use the non-symmetric filter $\widetilde F_\sigma$ with $\sigma=10^{-4}$ in \eqref{E:NSFilter}. We observe that the active set is located on the circle of discontinuity of the Hessian and near the boundary. We also see that the error in the $L^{\infty}$ norm is slightly better than in \Cref{Ta:Accurate}. We observe computationally that coarser choices of $\tau$, as for example $\tau = O(h^{1/5})$, lead to an empty active-set. In contrast to the smooth example, we now notice a gradual increase of the relative size of the active set. This is more prominent in the last three refinements, where this relative size increases from around $1.5\%$ to $2.9\%$ and then to $4.1\%$.
	\FloatBarrier
	\begin{table}[tbh]
		\begin{center}
			\begin{tabular}[t]{ | c | c | c |  c | c | c |}
				% same as monotone and accurate
				\hline
				$h$     & $\Tem$ & $\Tea$ & $\Tef$  & $\Tef$ Newton& Active Set \\
				\hline\hline
				$h=2^{-5}$	    &	    $4.0 \ 10^{-3}$ &    $5.67 \ 10^{-4}$   &  $5.50 \ 10^{-4}$			& 5   & 22   \\ 
				\hline
				$h=2^{-6}$	    &	 $1.9 \ 10^{-3}$   &  $2.48 \ 10^{-4}$   & 	$2.48 \ 10^{-4}$			& 8 & 8 \\
				\hline
				$h= 2^{-7}$  &	$9.0 \ 10^{-4}$    & 	$1.51 \ 10^{-4}$   & 	$ 1.40 \ 10^{-4}$			&  12 &  248	 \\
				\hline
				$h=2^{-8} $  &	  $5.7 \ 10^{-4}$       & 	$ 8.34 \ 10^{-5}$   & 	$7.58 \ 10^{-5}$			&   14 & 1904   \\ 
				\hline
				$h=2^{-9} $  &	$3.83 \ 10^{-4}$      & 	$5.31 \ 10^{-5}$   & 	$4.89 \ 10^{-5}$			&   16&   10825 \\ 
				%			\hline
				%			DoFs      & $P$: \ \# of points & $L_{\infty}-$error  & Newton steps  & Active Set \\
				%			\hline\hline
				%			N= 1089, $\hm=2^{-5}$	    &	     32     &  $5.50 \ 10^{-4}$			& 5   & 22   \\ 
				%			\hline
				%			N=4225,  $\hm=2^{-6}$	    &	     40     & 	$2.48 \ 10^{-4}$			& 8 & 8 \\
				%			\hline
				%			N=16641,  $\hm= 2^{-7}$  &		56	           & 	$ 1.40 \ 10^{-4}$			&  12 &  248	 \\
				%			\hline
				%			N= 66049,  $\hm=2^{-8} $  &		72           & 	$7.58 \ 10^{-5}$			&   14 & 1904   \\ 
				%				\hline
				%			N= 263169,  $\hm=2^{-9} $  &		96     & 	$4.89 \ 10^{-5}$			&   16&   10825 \\ 
				% symmetric theta
				%			N= 1089, $\hm=2^{-5}$	    &	     32     &  $5.50 \ 10^{-4}$			& 5   & 22   \\ 
				%			\hline
				%			N=4225,  $\hm=2^{-6}$	    &	     48     & 	$2.51 \ 10^{-4}$			& 8 & 44 \\
				%			\hline
				%			N=16641,  $\hm= 2^{-7}$  &		56	           & 	$ 1.40 \ 10^{-4}$			&  12 & 248	 \\
				%			\hline
				%			N= 66049,  $\hm=2^{-8} $  &		80           & 	$7.03 \ 10^{-5}$			&   15 & 2116  \\ 
				%			\hline
				%			N= 263169,  $\hm=2^{-9} $  &		96     & 	$4.89 \ 10^{-5}$			&   16& 10804  \\ 
				\hline
			\end{tabular}
		\end{center}
		\FloatBarrier
		\vskip0.2cm
		\caption{\small Discontinuous Hessian, $\tau = 0.62 \ h^{2/5}$. The $L^\infty$ error for $\Tef$ is about one order of magnitude better than that of $\Tem$ and slightly better than of $\Tea$. The latter is explained in \Cref{F:Contour}.}
		\label{Ta:FilteredC11}
	\end{table}
	\FloatBarrier
	
	In order to explain the smaller errors of \Cref{Ta:FilteredC11} with respect to \Cref{Ta:Accurate}, we present in \Cref{F:Contour} a contour plot with $|\uve-u|$ for the accurate scheme and meshsize $2h=2^{-6}$. In the same figure we depict the active set associated with the filtered scheme, and observe that the circle of discontinuity of the Hessian dominates the active set and is precisely the set of nodes where the accurate operator exhibits the biggest error. This explains why using the monotone operator at these points increases the accuracy and provides experimental justification of the filtered scheme.

\FloatBarrier
\begin{figure}[!htb]
	\begin{center}
		\includegraphics[scale=0.25]{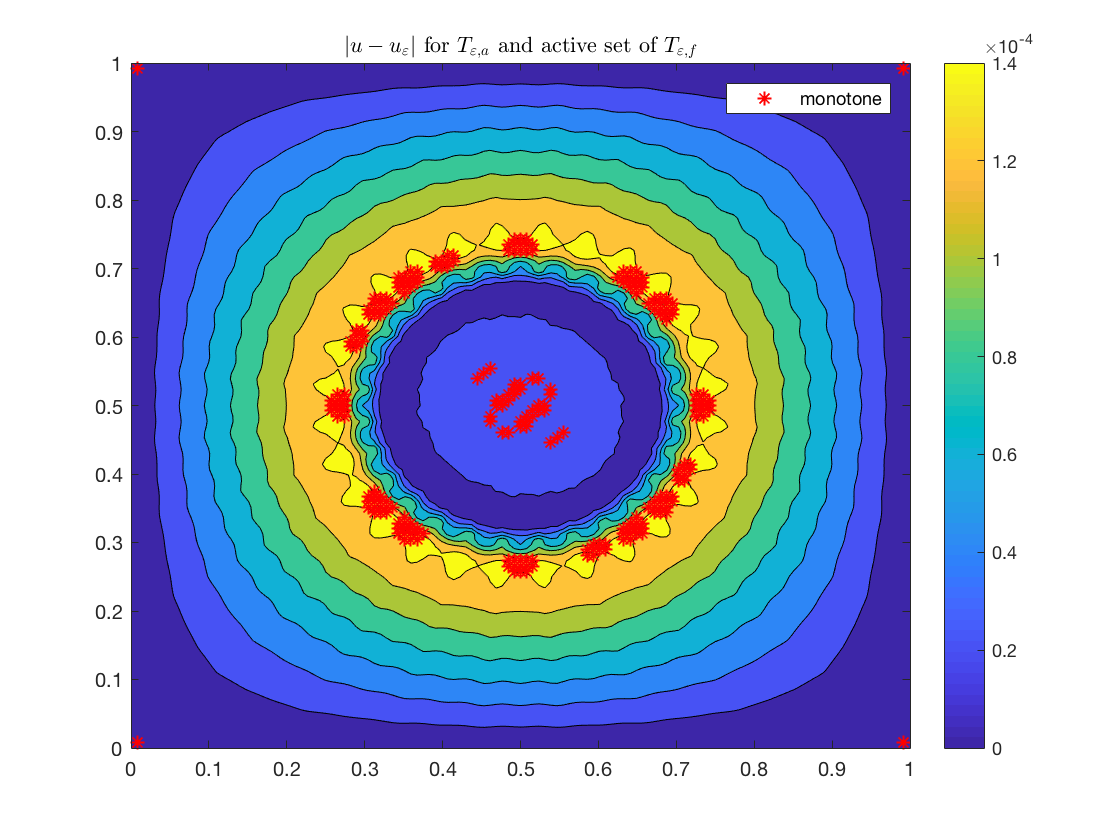}
	\end{center}
		\caption{\small Error $|\uve-u|$ of $\Tea$ for meshsize $2h=2^{-6}$ and active set for filtered scheme (in red). Most nodes on the circle of discontinuity of the Hessian, as well as the corners of $\Omega$, belong to the active set and are the nodes $x_i$ with the highest absolute error for $\Tea[\uve^2](x_i)$.}
		\label{F:Contour}
	\end{figure}
	\FloatBarrier

	We now explore further the behavior of the active set. We illustrate it on \Cref{F:ActiveSetC11} (left) for the final iteration that corresponds to $h=2^{-6}$. We observe that it includes the layer $\dm-$ away from the circle of discontinuity, the center of the domain, where $f=0$ and the problem degenerates, and a small boundary layer at the corners of the domain.
	\FloatBarrier
	\begin{figure}[!htb]
		\includegraphics[scale=0.43]{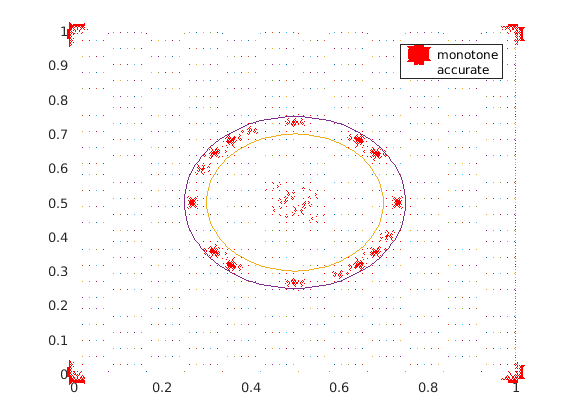}
		\includegraphics[scale=0.17]{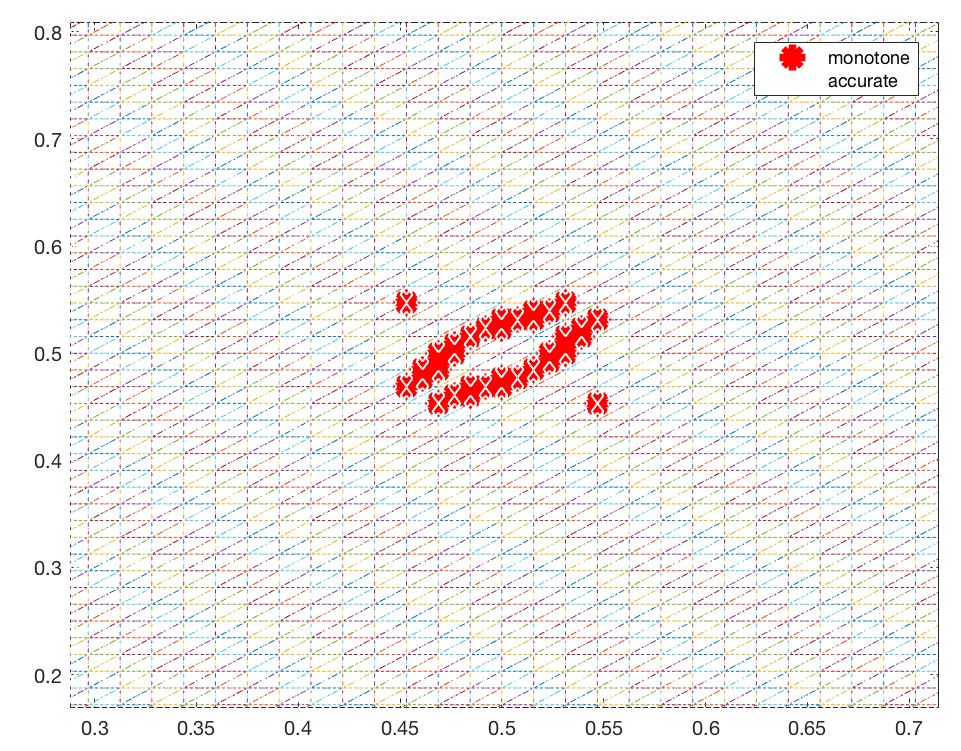}
		\caption{\small (a) Active set for discontinuous Hessian: $h =2^{-6}$, $\tau = 0.62 \ h^{2/5 } $ and $\widetilde F_\sigma$ with $\sigma=10^{-4}$ (left). The two circles correspond to $\left\{ |x-x_0|=0.2\right\}$ and $\left\{ |x-x_0| = 0.2+ \dm \right\}$. (b) Active set for a space-dependent filter function (right).}
		\label{F:ActiveSetC11}
	\end{figure}
	\FloatBarrier
	That the active set reduces to a layer around the circle of discontinuity of the Hessian is a desirable property of the filtered scheme, whereas nodes near the corners are caused by the disparate consistency errors of the operators near the boundary. In order to investigate the presence of active nodes within $\left\{ |x-x_0| \leq 0.2 \right\}$, we experiment with a space-dependent filter function: we restrict the use of the accurate operator only for those nodes $x_i \in \Nhi$ such that $f(x_i) \leq \tau$. This function helps to shed some light on the behavior of the two operators. We present in \Cref{F:ActiveSetC11}(b) a zoomed-in illustration of the active set that corresponds to the same parameters as in \Cref{F:ActiveSetC11}(a), but using this space-dependent filter. We observe that the active set is now only restricted to the center of the circle. By examining the values of the two operators in the active set, we realize that the monotone operator is of order $10^{-18}$, while the accurate operator is of order $10^{-8}$, which explains why this active set remains present. Since $f =0$ in $\left\{ |x-x_0| \leq 0.2 \right\}$, we want to choose the operator whose value is closer to zero. This is why both \eqref{E:NSFilter} and its space-dependent version enforce the monotone operator whenever the accurate operator is ``positive enough" in this region. For \eqref{E:NSFilter} this includes parts of the Hessian discontinuity at the center of the circle, while for the space-dependent version of the filter, this is only enforced in the center of the domain. We see that, although at first sight the definition of \eqref{E:NSFilter} may raise concerns about restricting the accurate operator too much, it actually allows us to employ the monotone operator precisely at the critical nodes. This is further portrayed in \Cref{F:Contour} which illustrates that the active set due to \eqref{E:NSFilter} includes nodes of lowest accuracy of $\Tea$. This is why \Cref{F:Errors_Comparison} displays smaller errors for the filtered operator than for the accurate one.
	
	%	\FloatBarrier
	\begin{figure}[!htb]
		\includegraphics[scale=0.15]{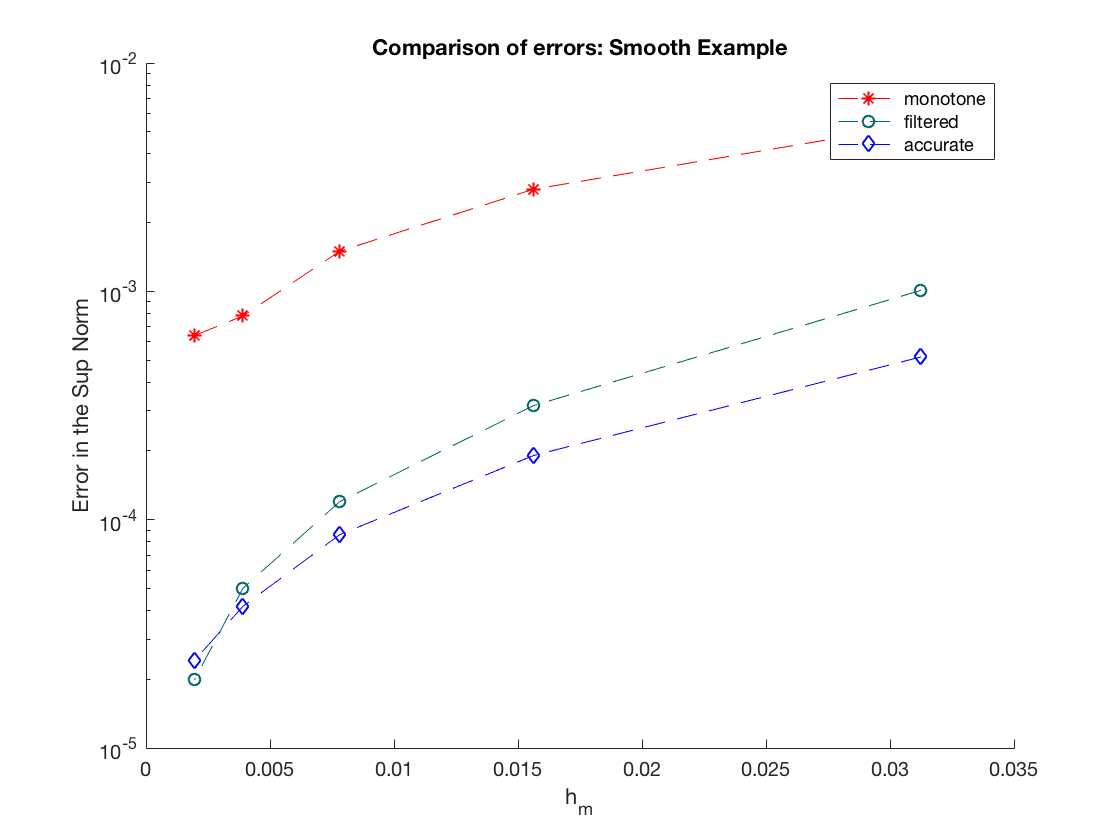}
		\includegraphics[scale=0.15]{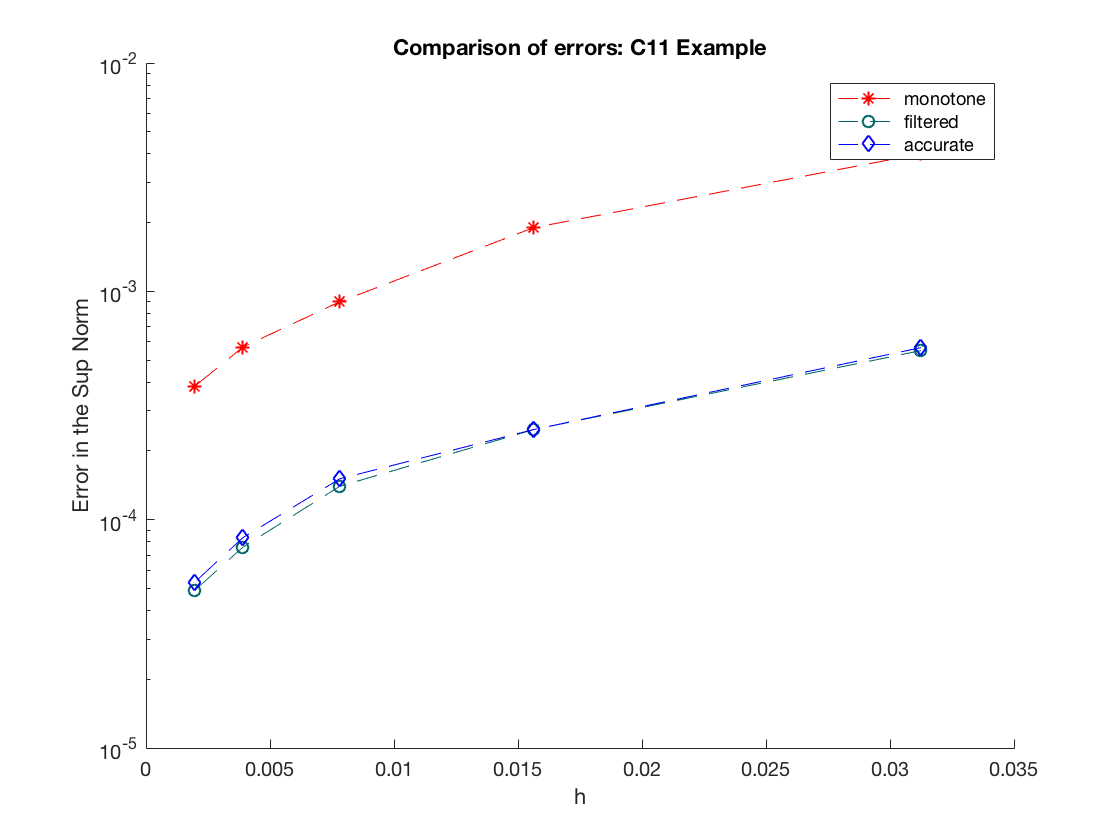}
		\caption{\small Error for monotone, accurate and filtered operator: a) Smooth example, for filtered scheme: $\tau = 4e^2 h $
			(left), b) $C^{1,1}$ example, for filtered scheme:  $\tau = 0.41 \ h^{2/5 } $	(right).}
		\label{F:Errors_Comparison}
	\end{figure}
	%\FloatBarrier

        \smallskip\noindent
	\textbf{Conclusions:}
	We see that in both examples the choice of the filtered scale $\tau$ is not a trivial task and it can have a dramatic effect on the outcome. Since the accurate operator performs better close to the boundary, it is expected that for many choices of $\tau$ there will be a boundary layer, where the monotone operator will be active. Unfortunately, this is due to the bad behavior of the monotone operator near the boundary and not to the filter capturing a singularity. This can be explained by the simplicity of the filter functions being used, because they compare only the values of the two operators and do not take into account the respective value of the right hand side.
 
	A crucial observation is that the convergence result allows for a great deal of flexibility in the choice of $\tau$. We can always choose $\tau$ to be relatively big with respect to $\varepsilon$, but still satisfy $\tau \to 0$ as $\varepsilon \to 0$. This results in the accurate operator being the one that is always active, hence allowing us to fully exploit the higher accuracy it offers, without sacrificing the convergence result. We do not present a table for this case, because it corresponds exactly to the results of \Cref{S:AccurateNumerics}. Instead we present two comparative figures for the errors due to the monotone, the accurate and the filtered scheme, which correspond to the results from \Cref{Ta:FilteredExp} and \Cref{Ta:FilteredC11}. We see that the filtered scheme is much more efficient than the monotone one even in the presence of boundary layers, and outperforms the accurate scheme in the non-smooth case.
	
%----------------------------------------------------------------------------------
	\subsection{Choice of Solver}
%----------------------------------------------------------------------------------
	The discussion in \Cref{S:AcCompRe} indicates that we need to take into account the sparsity of the resulting Jacobian matrix and possibly choose between a direct and an iterative solver. This choice depends on the problem at hand. For instance, for a strictly positive right hand side $f\ge f_0>0$, we may choose $\dm$ and $\tm$ as well as $\da$ and $\ta$ (with some modifications) on the basis of \cite[Theorem 5.3 (rates of convergence for classical solutions)]{NoNtZh2}. On the other hand, a right hand side $f$ that touches zero may yield a choice of scales as described in \cite[Theorem 5.6 (degenerate forcing $f \geq 0$)]{NoNtZh2}. Note that the ensuing constants are not accessible in either case. The first choice corresponds to a smaller angular parameter and leads to more directions, hence to a larger bandwidth for the Jacobian matrix. If in addition $f$ is sufficiently smooth so that the solution $u$ is $C^2(\Omega)$ and strictly convex, then the Jacobian matrix is likely to be positive definite which, combined with the reduced sparsity, makes it preferable to use an iterative solver such as the conjugate gradient. In contrast, a degenerate $f\ge0$ that touches zero does not guaranteed strict convexity and global regularity of $u$. In this case, \cite[Theorem 5.6 (degenerate forcing $f \geq 0$)]{NoNtZh2} suggests a coarser choice of $\tm$ which leads to a sparser Jacobian matrix that makes a direct solver competitive.

%-------------------------------------------------------------------------------        
	\subsection{Implementation Challenges:}
%-------------------------------------------------------------------------------        
	The above exposition shows that the filtered scheme provides the desirable combination of provable convergence and increased accuracy and adds to the family of similar approaches, such as \cite{FrOb2}. However, this improvement is not without challenges. In particular, implementing the filtered scheme requires special care in the following three aspects.
	\begin{enumerate} [$\bullet$]
		\item {\it Second differences:} Similarly to \cite{NoNtZh}, for each node $x_i \in \Nhi$ we need to locate the appropriate simplex where $x_i \pm \frac{\delta}{2}v_j$ and $x_i \pm \delta v_j$ belong to, in order to calculate the second differences for the monotone and accurate operator. This is a process that now needs to take place for both operators. We employ the efficient searching techniques of FELICITY to achieve this in minimal time, as in \cite{NoNtZh}.
		\item {\it Initialization:} To construct the initial guess, we solve a Laplace problem on the coarsest mesh and, for each subsequent refinement, we interpolate the solution of \eqref{E:FiltOp} on the previous mesh. This is an efficient choice from \cite{NoNtZh}, suggested earlier in \cite{FrOb2}, that we preserve for both the accurate and filtered schemes. To achieve this, we need to interpolate a quadratic function on a finer mesh. We thus create the new mesh $\Th^2$ at the end of each iteration, and use the searching and interpolation capabilities of FELICITY, to associate each node of $\Th^2$ with a simplex of $\mathcal{T}_{2h}^2$ and interpolate $\uve^2$ in $\Th^2$.
		\item {\it Two-mesh approach:} The definition \eqref{E:FiltOp} of filter operator utilizes a piecewise linear function $\uve^1 \in \Vh^1$ and a piecewise quadratic function $\uve^2 \in \Vth^2$, each defined on a different mesh $\mathcal{T}_{h}^1$ and $\mathcal{T}_{2h}^2$, and each involving the same number, $N$, of degrees of freedom. Even though $\mathcal{T}_{h}^1$ and $\mathcal{T}_{2h}^2$ are compatible, the global numbering of nodes, and thus of degrees of freedom, is in practice often different.  Consequently, in order to compare $\Tem[\uve^1](x_i)$ and $\Tea[\uve^2](x_i)$ at each node $x_i$, we need to communicate between $\mathcal{T}_{h}^1$ and $\mathcal{T}_{2h}^2$ and their degrees of freedom. However, exploiting the efficiency of Matlab for vectorized quantities, creating a map between the degrees of freedom for the two meshes takes minimal time. 
	\end{enumerate}

%%%%%%%%%%%%%%%%%%%%%%%%%%%%%%%%%%%%%%%%%%%%%%%%%%%%%%%%%%%%%%%%%%%%%%%%%%%%%%%%%        
	\section{Properties of Monotone and Accurate Operators}\label{S:Consistency}
%%%%%%%%%%%%%%%%%%%%%%%%%%%%%%%%%%%%%%%%%%%%%%%%%%%%%%%%%%%%%%%%%%%%%%%%%%%%%%%%%         
	We now embark on the theoretical analysis of our method. To this end, we first briefly recall key properties of the monotone operator of \cite{NoNtZh} and next present the notion of consistency of both the monotone and accurate operators and compare them. These properties are important to prove the existence and convergence results for the filtered scheme. One of the critical properties of the \MA equation is the convexity of its solution $u$. We mimic this property at the discrete level, using the notion of 	discrete convexity \cite[Definition 2.1]{NoNtZh}.
	\begin{Definition}[discrete convexity]\label{D:discrete-convexity}
		We say that $w_h \in \mathbb V_h^1$ is discretely convex  if
		$$
		\sdd w_h(x_i;v_j) \geq 0 \qquad \forall x_i \in \Nhi, \quad
		\forall v_j \in \St.
		$$
	\end{Definition}
	We have the following lemma for the monotone operator $\Tem$ \cite[Lemma 2.2]{NoNtZh}.
	\begin{Lemma}[discrete convexity of $\Tem$]\label{L:DisConv}	
		If  $w_h \in \Vh^1$ satisfies
		\begin{equation} \label{E:Oper}
		\Tem[w_h](x_i) \geq 0 \quad \forall x_i \in \mathcal{N}_h^0,
		\end{equation}
		then $w_h$ is \textit{discretely convex} and as a consequence
		\begin{equation}\label{E:simpler-def}
		\Tem [w_h](x_i)= \min_{\mathbf{v} \in \Vperptm} \prod_{j=1}^d \sdd w_h(x_i;v_j),
		\end{equation}
		namely 
		$$
		\sddp w_h(x_i;v_j) = \sdd w_h(x_i;v_j),
		\quad
		\sddm w_h(x_i;v_j) =0
		\quad\forall x_i\in \mathcal{N}_h^0,\quad\forall v_j\in \Stm.
		$$
		Conversely, if $w_h$ is discretely convex, then (\ref{E:Oper}) is valid.
	\end{Lemma}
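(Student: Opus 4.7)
The plan is to prove the two implications separately. The converse is essentially a direct verification, while the forward implication requires a short contradiction argument, after which the simplified formula follows by substitution.

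For the converse, assume $w_h$ is discretely convex, so $\sdd w_h(x_i; v_j) \geq 0$ for all $x_i\in\Nhi$ and all $v_j\in\Stm$. Then $\sddp w_h(x_i;v_j) = \sdd w_h(x_i;v_j) \geq 0$ and $\sddm w_h(x_i;v_j) = 0$ for every component of every basis $\mathbf{v} \in \Vperptm$. Substituting into the definition \eqref{E:2ScOp} of $\Tem$, each term inside the min is a product of nonnegative numbers minus a sum of zeros, hence $\Tem[w_h](x_i) \geq 0$, which is \eqref{E:Oper}.

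For the forward direction I would argue by contradiction. Suppose there exist a node $x_i\in\Nhi$ and a direction $v^*\in\Stm$ with $\sdd w_h(x_i; v^*) < 0$. The key structural ingredient is that the set $\Vperptm$ is constructed from $\Stm$ so that any $v^*\in\Stm$ can be completed to an orthonormal basis $\mathbf{v}^* = (v_j^*)_{j=1}^d\in\Vperptm$ with $v_1^* = v^*$; for instance in $d=2$ by pairing each discrete angle with its perpendicular. Evaluating the bracketed quantity in \eqref{E:2ScOp} on this particular basis, the first factor $\sddp w_h(x_i; v_1^*) = 0$ forces the product term to vanish, while the sum contains the positive contribution $\sddm w_h(x_i; v_1^*) = -\sdd w_h(x_i; v_1^*) > 0$. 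Hence the bracket is strictly negative, and taking the min over $\Vperptm$ can only make this smaller, so $\Tem[w_h](x_i) < 0$, contradicting \eqref{E:Oper}. Therefore $w_h$ is discretely convex, and \eqref{E:simpler-def} then follows exactly as in the converse direction.

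The only nontrivial point in this argument is the completion of $v^*\in\Stm$ to a basis in $\Vperptm$; this is not a logical gap but a structural property of how $\Vperptm$ is built from $\Stm$ in the scheme of \cite{NoNtZh}, where every discrete direction appears as a component of at least one basis by construction. Modulo this convention, the argument is driven entirely by the nonnegativity of $\sddp$ and $\sddm$ and by the fact that $\sddp w_h(x_i;v) = 0$ precisely when $\sdd w_h(x_i;v) \leq 0$.
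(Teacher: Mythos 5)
Your proof is correct and follows the same argument as the one in \cite{NoNtZh} (Lemma 2.2), which this paper cites rather than reproduces: the contrapositive via a basis containing the offending direction, so that the product vanishes through $\sddp=0$ while the sum contributes a strictly positive $\sddm$, is exactly the intended mechanism. The structural point you flag --- that every $v\in\Stm$ appears as a component of some $\mathbf{v}\in\Vperptm$ --- is indeed part of the construction of $\Vperptm$ in \cite{NoNtZh} and is needed for the forward implication, so your argument is complete modulo that convention.
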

	A critical feature for the convergence of $\Tem[\uve^1]$ is monotonicity \cite[Lemma 2.3]{NoNtZh}.
	\begin{Lemma}[monotonicity of $\Tem$] \label{L:Monotonicity} 
		Let $u_h,w_h \in \Vh^1$ be discretely convex. If $u_h - w_h$ attains a 
		maximum at an interior node $z \in \Nhi$, then 
		$$
		\Tem [w_h](z) \geq \Tem [u_h](z).
		$$
	\end{Lemma}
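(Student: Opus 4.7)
The plan is to reduce the operator $\Tem$ to its simpler form via \Cref{L:DisConv} (discrete convexity of $\Tem$), then translate the extremum condition $u_h-w_h$ at $z\in\Nhi$ into a pointwise inequality between the second differences $\sdd u_h(z;v)$ and $\sdd w_h(z;v)$, and finally combine these to compare the two minima.

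First, since both $u_h$ and $w_h$ are discretely convex by assumption, \Cref{L:DisConv} gives $\sddp u_h(z;v_j)=\sdd u_h(z;v_j)\ge0$, $\sddm u_h(z;v_j)=0$ and likewise for $w_h$ at every interior node and every $v_j\in\Stm$. Consequently, the definition \eqref{E:2ScOp} of $\Tem$ collapses to the simpler form \eqref{E:simpler-def} and I may work from now on with
\[
\Tem[u_h](z) = \min_{\mathbf v\in\Vperptm}\prod_{j=1}^d \sdd u_h(z;v_j),\qquad
\Tem[w_h](z) = \min_{\mathbf v\in\Vperptm}\prod_{j=1}^d \sdd w_h(z;v_j).
\]

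Next I exploit the maximum principle at $z$. Since $u_h-w_h$ attains its maximum over $\Omega_h$ at $z$ and $z$ is an interior node, for every unit vector $v\in\Stm$ the points $z\pm\dm v$ lie in $\Omega_h$ (assuming $\dm$ is small enough relative to the distance of $z$ to $\partial\Omega_h$, which is part of the standing admissibility of directions in $\Stm$). Hence
\[
u_h(z\pm\dm v)-w_h(z\pm\dm v)\le u_h(z)-w_h(z),
\]
and rearranging and adding the two inequalities yields, after dividing by $\dm^2$,
\[
\sdd u_h(z;v)\le \sdd w_h(z;v)\qquad\forall\, v\in\Stm.
\]
Combined with the discrete convexity of $u_h$, this produces the pointwise bound $0\le \sdd u_h(z;v_j)\le \sdd w_h(z;v_j)$ for every direction.

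Finally, let $\mathbf v^\star=(v_j^\star)_{j=1}^d\in\Vperptm$ be a minimizer for $\Tem[w_h](z)$. Monotonicity of the product on nonnegative factors gives
\[
\Tem[u_h](z) \le \prod_{j=1}^d \sdd u_h(z;v_j^\star)\le \prod_{j=1}^d \sdd w_h(z;v_j^\star)=\Tem[w_h](z),
\]
which is the asserted inequality. The only subtle point is checking that the translates $z\pm\dm v$ remain inside $\Omega_h$ so that the extremum inequality applies; this is handled by the admissibility of the discrete direction set $\Stm$ at interior nodes, and is the only place where interiority of $z$ is used.
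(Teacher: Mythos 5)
Your proof is correct and follows the same route as the argument in \cite[Lemma 2.3]{NoNtZh}, which this paper cites rather than reproves: reduce to the form \eqref{E:simpler-def} via discrete convexity, derive $\sdd u_h(z;v)\le \sdd w_h(z;v)$ from the maximum of $u_h-w_h$ at $z$, and compare products of nonnegative factors at a minimizer for $w_h$. The one caveat you flag—that $z\pm\dm v$ must lie in $\overline{\Omega}_h$—is indeed handled by the convention (cf.\ the definition of $\hat\delta$ in \eqref{E:Bi}) that the coarse scale is shortened near $\partial\Omega_h$ so that all stencil points remain admissible.
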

	Another important property that relies on monotonicity is the following 
	discrete comparison principle \cite[Lemma 2.4]{NoNtZh}.
	\begin{Lemma}[discrete comparison principle for $\Tem$] \label{L:DCP}
		Let $u_h,w_h \in \Vh^1$ with $u_h \leq w_h$ on the boundary $\partial \Omega_h$ be such that 
		\begin{equation}\label{E:comparison}
		\Tem[u_h](x_i) \geq \Tem[w_h](x_i)  \geq 0 \ \ \ \forall x_i \in \Nhi. 
		\end{equation}
		Then, $u_h \leq w_h$ everywhere.
	\end{Lemma}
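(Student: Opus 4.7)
The plan is to argue by contradiction. Suppose $u_h(z_0) > w_h(z_0)$ at some $z_0 \in \Nhi$; since $u_h \leq w_h$ on $\partial \Omega_h$, the grid function $u_h - w_h$ attains a strictly positive maximum at an interior node. A direct application of monotonicity (\Cref{L:Monotonicity}) at that maximizer would yield only $\Tem[w_h] \geq \Tem[u_h]$, which combined with the hypothesis $\Tem[u_h] \geq \Tem[w_h]$ gives equality but no contradiction. Therefore a strict mismatch must be introduced by perturbing $u_h$ with a strictly convex auxiliary function.

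Concretely, I would set $\phi_h \in \Vh^1$ to be the piecewise linear interpolant of $|x|^2 - C_\Omega$, where $C_\Omega := \max_{x \in \overline\Omega} |x|^2$, so that $\phi_h \leq 0$ on $\overline\Omega_h$. For $\eta > 0$ small enough, define $u_h^\eta := u_h + \eta \phi_h \in \Vh^1$. Then $u_h^\eta \leq u_h \leq w_h$ on $\partial \Omega_h$ while $(u_h^\eta - w_h)(z_0) > 0$ still holds, so that $u_h^\eta - w_h$ attains a strictly positive maximum at some interior node $z^\eta \in \Nhi$. Using the identity $|x_i + \dm v|^2 - 2|x_i|^2 + |x_i - \dm v|^2 = 2\dm^2$ together with the fact that piecewise linear interpolation dominates the convex function $|\cdot|^2$, one gets $\sdd \phi_h(x_i; v_j) \geq 2$ for all $x_i \in \Nhi$ and all $v_j \in \Stm$. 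Since \Cref{L:DisConv} gives discrete convexity of both $u_h$ and $w_h$ from the hypotheses $\Tem[u_h], \Tem[w_h] \geq 0$, the perturbed $u_h^\eta$ is also discretely convex and satisfies the pointwise bound $\sdd u_h^\eta(x_i; v_j) \geq \sdd u_h(x_i; v_j) + 2\eta$.

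The conclusion follows by chaining three inequalities at $z^\eta$. First, \Cref{L:Monotonicity} applied to the discretely convex pair $(u_h^\eta, w_h)$ at the maximizer $z^\eta$ yields $\Tem[w_h](z^\eta) \geq \Tem[u_h^\eta](z^\eta)$. Second, using the simplified representation \eqref{E:simpler-def} for discretely convex functions together with the elementary inequality $\prod_{j=1}^d (a_j + 2\eta) \geq \prod_{j=1}^d a_j + (2\eta)^d$ valid for $a_j \geq 0$, and taking the minimum over orthonormal frames, I obtain $\Tem[u_h^\eta](z^\eta) \geq \Tem[u_h](z^\eta) + (2\eta)^d$. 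Third, the hypothesis provides $\Tem[u_h](z^\eta) \geq \Tem[w_h](z^\eta)$. Concatenating these yields
\[
\Tem[w_h](z^\eta) \;\geq\; \Tem[u_h^\eta](z^\eta) \;\geq\; \Tem[u_h](z^\eta) + (2\eta)^d \;\geq\; \Tem[w_h](z^\eta) + (2\eta)^d,
\]
which is a contradiction since $\eta > 0$.

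The main obstacle I anticipate is the careful verification that $\sdd \phi_h \geq 2$ at the discrete level: the stencil points $x_i \pm \dm v_j$ generally do not coincide with mesh nodes, so one must use the convexity of $|\cdot|^2$ to control the piecewise linear interpolant at these off-node points, rather than evaluating the quadratic directly. The rest is a short algebraic manipulation once discrete convexity is in place and \eqref{E:2ScOp} collapses to the clean product form \eqref{E:simpler-def}.
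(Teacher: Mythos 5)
Your argument is correct: the reduction to a node via piecewise linearity, the perturbation $u_h+\eta\,\interpo(|x|^2-C_\Omega)$ to gain the strict margin $(2\eta)^d$ through $\sdd\phi_h\ge 2$ and the inequality $\prod_j(a_j+2\eta)\ge\prod_j a_j+(2\eta)^d$, and the final contradiction via monotonicity at the interior maximizer are all sound. The paper does not reprove this lemma but imports it from \cite[Lemma 2.4]{NoNtZh}, whose proof is essentially this same perturbation-by-a-discretely-convex-quadratic argument, so your proposal matches the intended route.
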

	We now provide a consistency estimate for $\Tem$ and compare it later with $\Tea$. To this end, given a node $x_i \in \Nhi$ we denote by
	\begin{equation}\label{E:Bi}
	B_i := \cup \{\overline{T}: T\in\Th, \, \textrm{dist }(x_i,T) \le \hat\delta\}
	\end{equation}
	where $\hat\delta := \rho \delta$ with $0<\rho\le 1$ is so that
	$x_i\pm\hat\delta v_j\in\overline{\Omega}_h$ for all $v_j\in\St$. 
	We also introduce the $\delta$-interior region
	$$
	\Omega_{h,\delta} = \left\{ T \in \mathcal{T}_h \ : \  {\rm dist}
	(x,\partial \Omega_h) \geq \delta  \ \forall x \in T  \right\}.
	$$
	The above notation is introduced for general $\delta$ and $\theta$ and is adjusted accordingly for each of the following consistency lemmas. The following result is proven in \cite[Lemma 4.2]{NoNtZh}.
	\begin{Lemma}[{consistency of $\Tem [\interpo u]$}] \label{L:FullConsistencyMon}
		Let $x_i \in \Nhi \cap\Omega_{h,\dm}$ and $B_i$ be defined as in
		\eqref{E:Bi}. Let also $u \in C^{2+k,\alpha}(B_i)$ with $0<\alpha\leq 1$ and
		$k=0,1$ convex, and let $\interpo u $ be its piecewise linear interpolant. 
		Then 
		\begin{equation}\label{E:FullConsistency}
		\left|  \det D^2u(x_i) - \Tem[\interpo u] (x_i)  \right| \leq C_1(d,\Omega,u) \dm^{k+\alpha} + C_2(d,\Omega,u) \left( \frac{h^2}{\dm^2} + \tm^2 \right),
		\end{equation}
		where
		\begin{equation}\label{E:C1-C2}
		C_1(d,\Omega,u)= C |u|_{C^{2+k,\alpha}(B_i)} |u|_{W^2_{\infty}(B_i)}^{d-1}, \quad C_2 = C |u|_{W^2_{\infty}(B_i)}^d.
		\end{equation}
		If $x_i\in\Nhi$ and $u \in W^2_{\infty}(B_i)$, then (\ref{E:FullConsistency}) remains valid with $\alpha=k=0$ and $\Cka(\Bhi)$ replaced by $\Wti(\Bhi)$.
	\end{Lemma}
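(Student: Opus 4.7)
The strategy is to split the total error via the triangle inequality into three pieces: (i) the error from discretizing the set of orthonormal frames $\Vperp$ by $\Vperptm$; (ii) the truncation error from approximating $v_j^T D^2 u(x_i) v_j$ by the centered second difference $\sdd u(x_i;v_j)$; and (iii) the interpolation error from replacing $u$ by $\interpo u$ inside the second difference. Using \eqref{E:Det}, I would first compare $\det D^2 u(x_i) = \min_{\bv\in\Vperp} \prod_{j=1}^d v_j^T D^2 u(x_i) v_j$ with $\min_{\bv^{\tm}\in\Vperptm} \prod_j (v_j^{\tm})^T D^2 u(x_i) v_j^{\tm}$. The continuous minimum is attained at the eigenvectors of $D^2 u(x_i)$, which constitute a critical point of $\bv\mapsto\prod_j v_j^T D^2 u(x_i) v_j$ on the orthonormal-frame manifold; combined with the approximation property of $\Vperptm$ (for any $\bv\in\Vperp$ there is $\bv^{\tm}\in\Vperptm$ within $\tm$, and conversely), this yields a quadratic bound of size $C|u|_{\Wti(\Bhi)}^d \tm^2$ rather than the naive $O(\tm)$.

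Next I would bound the remaining difference between the continuous minimum over $\Vperptm$ and $\Tem[\interpo u](x_i)$ by the supremum over $\bv^{\tm}\in\Vperptm$ of the pointwise integrand. For each fixed direction $v_j^{\tm}$, a centered Taylor expansion over $\Bhi$ (valid because $x_i\pm\dm v_j^{\tm}\in \Bhi$ by definition of $\Bhi$) gives $|(v_j^{\tm})^T D^2 u(x_i) v_j^{\tm} - \sdd u(x_i;v_j^{\tm})| \leq C|u|_{\Cka(\Bhi)}\dm^{k+\alpha}$, producing the $\dm^{k+\alpha}$ term in \eqref{E:FullConsistency}. A standard $\Wti$ interpolation estimate on each simplex containing $x_i\pm\dm v_j^{\tm}$ then yields $|\sdd u(x_i;v_j^{\tm}) - \sdd(\interpo u)(x_i;v_j^{\tm})| \leq C|u|_{\Wti(\Bhi)} h^2/\dm^2$, the source of the $h^2/\dm^2$ term.

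It remains to reconcile the pointwise integrand with the discrete operator, which splits $\sdd(\interpo u)$ into positive and negative parts. Because $u$ is convex, $(v_j^{\tm})^T D^2 u(x_i) v_j^{\tm} \geq 0$, and the two bounds just derived imply $\sdd(\interpo u)(x_i;v_j^{\tm}) \geq -C(\dm^{k+\alpha} + h^2/\dm^2)$. Therefore $\sddm(\interpo u)$ is of this same size, so $\sum_j\sddm(\interpo u)$ already has the desired order, and $\sddp(\interpo u) = \sdd(\interpo u) + \sddm(\interpo u)$ differs from $(v_j^{\tm})^T D^2 u(x_i) v_j^{\tm}$ by the same two errors. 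The product $\prod_j \sddp(\interpo u)$ is then compared with $\prod_j (v_j^{\tm})^T D^2 u(x_i) v_j^{\tm}$ via a telescoping identity, where one factor absorbs the $\Cka$-plus-$\Wti$ error while the other $d-1$ factors are bounded by $C|u|_{\Wti(\Bhi)}$. This produces precisely the constants $C_1 = C|u|_{\Cka(\Bhi)}|u|_{\Wti(\Bhi)}^{d-1}$ and $C_2 = C|u|_{\Wti(\Bhi)}^d$ of \eqref{E:C1-C2}.

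The main obstacle is the angular step: a crude Lipschitz estimate of the product only yields $O(\tm)$, and upgrading it to $O(\tm^2)$ requires exploiting that the continuous minimizer sits at a critical point of the restricted product function on the orthonormal-frame manifold, so first-order variations vanish. The case $u\in\Wti(\Bhi)$ with $k=\alpha=0$ follows by the same argument upon truncating the Taylor expansion at first order.
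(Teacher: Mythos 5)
Your proposal is correct and follows essentially the same route as the paper's (cited) proof from \cite[Lemma 4.2]{NoNtZh}: directional second-difference consistency (Taylor truncation of order $\dm^{k+\alpha}$ plus interpolation error $h^2/\dm^2$), the eigenvalue bound $|\lambda_j|\le|u|_{W^2_\infty(B_i)}$ to telescope the product and control the positive/negative parts via convexity, and the quadratic angular error $\tm^2$ obtained from the fact that the eigenframe is a minimizer (hence critical point) of the frame-dependent product. The paper phrases the $\tm^2$ gain through the explicit eigenvector/unit-norm computation rather than your manifold critical-point language, but the two are the same argument.
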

	
	\begin{remark}[optimal consistency error] \label{R:OptConsMon} We observe that, for a smooth function $u$, \Cref{L:FullConsistencyMon} gives a consistency error of order $h$ upon equating $\dm^2, \tm^2$ and $\frac{h^2}{\dm^2}$ and taking 
		$\dm= h^{1/2}$; this explains the accuracy barrier alluded to in \Cref{S:Introduction}. 
	\end{remark}

	We now present a consistency lemma for $\Tea$ to showcase the improved formal accuracy of $\Tea$ relative to $\Tem$. We only sketch the proof of the result, since it mostly follows along the lines of \cite[Lemma 4.1 (consistency of $\sdd \interp u$)]{NoNtZh}. Note that the use of isoparametric finite elements in $\mathcal{T}_{2h}^2$ does not affect the following results, because in the interior of the domain the elements of $\mathcal{T}_{2h}^2$ are straight and the $\da$-interior domain $\Omega_{h,\da}$ does not contain curved boundary cells.

        \begin{Lemma} [consistency of $\sdda \interpt u $] \label{L:ConsistencyAc}
		Let $u \in W^3_\infty(B_i)$, $\interpt u$ be its Lagrange
		interpolant in $\Vth^2$, and $B_i$ be defined in \eqref{E:Bi}.
		The following two estimates are then valid:
		\begin{enumerate}[(i)]
			\item 
			For all $x_i  \in \Nhi$ and all  $v_j \in \Sta$, we have
			$$
			\left|\sdda \interpt u (x_i;v_j) \right|\leq C \ |u|_{W^2_{\infty}(B_i) } .
			$$
			\item
			If in addition $u\in C^{2+k,\alpha}(B_i)$ for $k=0,\ldots,3$ and $\alpha \in (0,1]$,
			then for all $x_i \in \Nhi \cap \Omega_{h,\da}$ and all $v_j \in \Sta$, 
			we have
			$$
			\left|\sdda \interpt u (x_i;v_j) - \frac{\partial^2 u}{\partial v_j^2}(x_i) \right| \leq C \left(|u|_{C^{2+k,\alpha}(B_i)} \da^{k+\alpha} + |u|_{W^3_{\infty}(B_i)} \frac{h^3}{\da^2} \right).
			$$
		\end{enumerate}
		In both cases $C$ stands for a constant independent of the two scales
		$h$ and $\da$, the parameter $\ta$ and the function $u$.	
	\end{Lemma}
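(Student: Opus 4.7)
The plan is to follow the argument of \cite[Lemma 4.1]{NoNtZh} for the three-point stencil $\sdd$, while exploiting the fact that the five-point operator $\sdda$ in \eqref{E:SecDifAc} is a Richardson extrapolation of the standard second difference $D_s^2 f(x;v):=s^{-2}\bigl(f(x+sv)-2f(x)+f(x-sv)\bigr)$. A direct calculation verifies the identity
\begin{equation*}
\sdda f(x_i;v_j) = \tfrac{1}{3}\bigl(4\, D_{\da/2}^2 f(x_i;v_j) - D_{\da}^2 f(x_i;v_j)\bigr),
\end{equation*}
so that $\bigl|\sdda u(x_i;v_j)\bigr|\le \tfrac{5}{3}|u|_{W^2_\infty(B_i)}$ whenever $u\in W^2_\infty(B_i)$, because each $D_s^2 u$ with $s\le\hat\delta$ is bounded by $\bigl\|\tfrac{\partial^2 u}{\partial v_j^2}\bigr\|_{L^\infty(B_i)}$.

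For (i), I would split $\sdda \interpt u = \sdda u + \sdda(\interpt u - u)$ and use the above bound for the first term. The second term is controlled by the standard $L^\infty$ interpolation estimate $\|\interpt u - u\|_{L^\infty(B_i)} \le C h^2 |u|_{W^2_\infty(B_i)}$ for quadratic Lagrange elements, amplified by the factor $\da^{-2}$, giving $C(h/\da)^2|u|_{W^2_\infty(B_i)}$. Under the standing scale relation $h\le C\da$ intrinsic to the two-scale setting, this is absorbed into $C|u|_{W^2_\infty(B_i)}$, yielding (i).

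For (ii), the same splitting applies. The interpolation term now uses the sharper bound $\|\interpt u - u\|_{L^\infty(B_i)} \le C h^3 |u|_{W^3_\infty(B_i)}$, which is available on the straight interior simplices of $\mathcal{T}_{2h}^2$ covering $B_i$ because $x_i\in\Omega_{h,\da}$ excludes the curved boundary cells; amplification by $\da^{-2}$ produces the second term $C h^3\da^{-2}|u|_{W^3_\infty(B_i)}$ in the stated bound. The truly new ingredient is the consistency of $\sdda u$ itself. A Taylor expansion about $x_i$ along the direction $v_j$ shows that the five-point weights $(-1,16,-30,16,-1)$ are designed so that not only do the odd-order derivatives $\tfrac{\partial^{2\ell+1} u}{\partial v_j^{2\ell+1}}(x_i)$ vanish by symmetry, but the contribution of $\tfrac{\partial^{4} u}{\partial v_j^{4}}(x_i)$ also cancels --- this is precisely the Richardson-extrapolation cancellation. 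Equivalently, $\sdda P(x_i;v_j) = \tfrac{\partial^2 P}{\partial v_j^2}(x_i)$ for every univariate polynomial $P$ of degree $\le 5$ in the variable along $v_j$. Hence, writing $u = P_{k+2} + R_{k+2}$ with the Taylor polynomial $P_{k+2}$ of degree $k+2\le 5$ about $x_i$ and remainder satisfying $|R_{k+2}(y)|\le C|u|_{C^{2+k,\alpha}(B_i)}|y-x_i|^{k+2+\alpha}$, we obtain
\begin{equation*}
\Bigl|\sdda u(x_i;v_j)-\tfrac{\partial^2 u}{\partial v_j^2}(x_i)\Bigr|
= \bigl|\sdda R_{k+2}(x_i;v_j)\bigr| \le C|u|_{C^{2+k,\alpha}(B_i)}\da^{k+\alpha},
\end{equation*}
which is the first term in the estimate.

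The one subtle step is the polynomial-exactness identity $\sdda P(x_i;v_j)=\tfrac{\partial^2 P}{\partial v_j^2}(x_i)$ for $\deg P\le 5$, i.e.\ the cancellation of the $\da^4$ coefficient in the Taylor expansion of the five-point stencil, which is the very reason for choosing Richardson-type weights. Once this algebraic cancellation is verified, the remaining steps are routine adaptations of the three-point proof in \cite{NoNtZh}. The geometric hypothesis $x_i\in\Nhi\cap\Omega_{h,\da}$ is used only to guarantee that every stencil point $x_i\pm\da v_j,\, x_i\pm\tfrac{\da}{2}v_j$ lies in $\overline{\Omega}_h\cap B_i$ and that the simplices of $\mathcal{T}_{2h}^2$ covering those points are affine, which is precisely what legitimizes the optimal quadratic interpolation estimate used above.
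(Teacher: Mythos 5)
Your proposal is correct and follows essentially the same route as the paper's (sketched) proof: the Richardson rewriting $\sdda=\tfrac{4}{3}D_{\da/2}^2-\tfrac{1}{3}D_{\da}^2$, the splitting $\sdda\interpt u=\sdda u+\sdda(\interpt u-u)$ with the quadratic interpolation estimate at order $m=2$ resp.\ $m=3$, and the use of $h\le\da$. The only difference is that you spell out, via Taylor expansion and degree-$5$ polynomial exactness, the ``cancellation of higher order derivatives built into the definition of $\sdda$'' that the paper merely invokes, and your verification of that cancellation is accurate.
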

	\begin{proof}
We rewrite \eqref{E:SecDifAc} as
			$$
			\sdda u(x;v) = \frac{4}{3} \frac{u(x+\frac{\da}{2})-2u(x)+u(x-\frac{\da}{2})}{\left(\frac{\da}{2}\right)^2} - \frac{1}{3}
			\frac{u(x+\da v)-2u(x)+u(x-\da v)}{\da^2}
			$$
%			\small
%			$$
%			16 ( u(x_i+\frac{\delta}{2} v_j)-  u(x_i) ) + ( u(x_i)	- u(x_i+\delta v_j)) 
%			+ 	16( u(x_i-\frac{\delta}{2} v_j) -  u(x_i)) + (  u(x_i) -  u(x_i-\delta v_j) )
%			$$
%			\normalsize
			and then proceed as in \cite[Lemma 4.1 (consistency of $\sdd \interp u$)]{NoNtZh} to obtain 
			$$
			\sdda u(x_i;v) \leq C |u|_{W^2_{\infty}(B_i)}.
			$$
			We then incorporate the $L^\infty$-interpolation error estimate for quadratics \cite{BrenScott}
			$$
			\|u - \interpt u\|_{L^\infty(\Omega_h)} \leq C \ |u|_{W^m_{\infty}(B_i)} h^m \quad m=2,3,
			$$
			with $m=2$, along with the relation $h \leq \da$, to complete the proof of (i). To prove (ii) we make use of $m=3$ and exploit cancellation of higher order derivatives built in the definition of $\sdda u(x_i;v)$. We refer to \cite[Lemma 4.1(ii)]{NoNtZh} for similar estimates for $\Tem$.
	\end{proof}
        
	\Cref{L:ConsistencyAc} implies the following result, which is similar to \cite[Lemma 4.2]{NoNtZh}.
	\begin{Lemma}[{consistency of $\Tea [\interpt u]$}] \label{L:FullConsistencyAc}
		Let $x_i \in \Nhi \cap\Omega_{h,\da}$ and $B_i$ be defined as in
		\eqref{E:Bi}. If $u \in C^{2+k,\alpha}(B_i)$ with $0<\alpha\leq 1$ and
		$k=0,\ldots,3$ is convex, and $\interpt u $ is its piecewise quadratic interpolant, then 
		\begin{equation}\label{E:FullConsistencyAc}
                \begin{aligned}
		  \left|  \det D^2u(x_i) - \Tea[\interpt u] (x_i)  \right| &\leq C_1(d,\Omega,u) \da^{k+\alpha}
                  \\
                  & + C_2(d,\Omega,u)  \frac{h^3}{\da^2} + C_3(d,\Omega,u) \ta^2 ,
                \end{aligned}    
		\end{equation}
		where
		\begin{equation}\label{E:C1-C2A}
		\begin{aligned}
		  & C_1(d,\Omega,u)= C |u|_{C^{2+k,\alpha}(B_i)} |u|_{W^2_{\infty}(B_i)}^{d-1},
                  \\
                  & C_2 = C |u|_{W^3_{\infty}(B_i)}|u|_{W^2_{\infty}(B_i)}^{d-1},
                  \\
		  & C_3 = C |u|_{W^2_{\infty}(B_i)}^d.
		\end{aligned}
		\end{equation}
		If $x_i\in\Nhi$ and $u \in W^2_{\infty}(B_i)$, then (\ref{E:FullConsistencyAc}) remains valid with $\alpha=k=0$ and $\Cka(\Bhi)$ replaced by $\Wti(\Bhi)$.
	\end{Lemma}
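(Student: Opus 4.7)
The plan is to mirror the proof of \cite[Lemma 4.2]{NoNtZh} for the monotone operator, using \Cref{L:ConsistencyAc} in place of the three-point consistency estimate employed there. I would split the error $|\det D^2u(x_i) - \Tea[\interpt u](x_i)|$ into three contributions that reproduce the three terms in \eqref{E:FullConsistencyAc}, and then assemble them by exploiting $|\min f - \min g| \le \|f-g\|_\infty$ when passing to the minimum over $\Vperpta$.

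First, I would pass from the continuous to the discrete minimization set. Starting from $\det D^2u(x_i) = \min_{\mathbf{v}\in\Vperp}\prod_j v_j^T D^2u(x_i) v_j$, let $\mathbf{v}^*$ be an eigenbasis of $D^2u(x_i)$ that achieves the continuous minimum, and select $\mathbf{v}^{*,\ta}\in\Vperpta$ with $|v_j^{*,\ta}-v_j^*|\le\ta$. Since $\mathbf{v}^*$ is a critical point of $\mathbf{v}\mapsto \prod_j v_j^T D^2u(x_i) v_j$ subject to orthonormality, a Taylor expansion at $\mathbf{v}^*$ together with the Lipschitz bound on this map gives
\[
\bigl|\min_{\mathbf{v}^\ta\in\Vperpta}\prod_{j=1}^d (v_j^\ta)^T D^2u(x_i)\, v_j^\ta - \det D^2u(x_i)\bigr| \le C|u|_{W^2_\infty(B_i)}^d \, \ta^2,
\]
which yields the $C_3\ta^2$ term.

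Second, I would replace each exact directional second derivative $(v_j^\ta)^T D^2u(x_i)v_j^\ta = \partial^2 u/\partial v_j^2(x_i)$ by the five-point stencil $\sdda\interpt u(x_i;v_j^\ta)$. Combining the telescoping identity for a product of $d$ factors with the $L^\infty$ bound of \Cref{L:ConsistencyAc}(i) on $d-1$ factors and the pointwise estimate of \Cref{L:ConsistencyAc}(ii) on the remaining one produces, uniformly in $\mathbf{v}^\ta\in\Vperpta$,
\[
\bigl|\prod_{j}\sdda\interpt u(x_i;v_j^\ta) - \prod_{j}\partial^2 u/\partial v_j^2(x_i)\bigr| \le C|u|_{W^2_\infty(B_i)}^{d-1}\bigl(|u|_{C^{2+k,\alpha}(B_i)}\da^{k+\alpha} + |u|_{W^3_\infty(B_i)} h^3/\da^2\bigr),
\]
giving the $C_1\da^{k+\alpha}$ and $C_2\, h^3/\da^2$ contributions.

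The third and main obstacle is to absorb the split $\sddap$, $\sddam$ that appears in the definition \eqref{E:AccOp} of $\Tea$, despite the fact that $\sdda$ itself is not monotone. The key observation is that convexity of $u$ forces $\partial^2 u/\partial v_j^2 \ge 0$, so any negativity of $\sdda\interpt u(x_i;v_j^\ta)$ is itself bounded by the consistency error of step two; consequently both $\sddam\interpt u$ and $|\sddap\interpt u - \sdda\interpt u|$ are controlled by that same estimate, and the correction $-\sum_j \sddam\interpt u$ is of the same order as the bound already established. This keeps the resulting estimate of the full operator within \eqref{E:FullConsistencyAc}. The variant for $u\in W^2_\infty(B_i)$ and $x_i\in\Nhi$ without the $\Omega_{h,\da}$ restriction then follows by invoking only \Cref{L:ConsistencyAc}(i) in step two, in the spirit of the corresponding statement in \Cref{L:FullConsistencyMon}.
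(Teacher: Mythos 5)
Your proposal is correct and follows essentially the same route as the paper, which simply invokes the argument of \cite[Lemma 4.2]{NoNtZh} together with \Cref{L:ConsistencyAc} and the eigenvalue bound $|\lambda_j|\le |u|_{W^2_\infty(B_i)}$; your three steps (angular discretization error $O(\ta^2)$ at the eigenbasis, telescoping the product using \Cref{L:ConsistencyAc}, and absorbing the $\sddap/\sddam$ split via convexity) are precisely the content of that cited argument. You have in fact supplied more detail than the paper does.
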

	\begin{proof}
		We argue as in \cite[Lemma 4.2]{NoNtZh}, namely we use \Cref{L:ConsistencyAc} (consistency of $\sdda \interpt u$) along with the bound $
		|\lambda_j| \leq |u|_{W^2_{\infty}(B_i)}
		$ on the eigenvalues $\lambda_j$ of $D^2u$.
	\end{proof}
        
	\begin{remark}[improved consistency error] \label{R:OptConsAc} We observe that for a smooth function $u \in C^{5,1}(\overline \Omega)$ \Cref{L:FullConsistencyAc} gives a consistency error of order $h^2$, instead of only order $h$, which corresponds again to the optimal choice $\da = h^{1/2}$.
	\end{remark}
	
%%%%%%%%%%%%%%%%%%%%%%%%%%%%%%%%%%%%%%%%%%%%%%%%%%%%%%%%%%%%%%%%%%%%%%%%%%%%%%%%	
	\section{Filtered Scheme - Analysis} \label{S:FilteredScheme}
%%%%%%%%%%%%%%%%%%%%%%%%%%%%%%%%%%%%%%%%%%%%%%%%%%%%%%%%%%%%%%%%%%%%%%%%%%%%%%%%
In this section we prove the existence of discrete solutions of \eqref{E:FiltOp} and their convergence to the unique viscosity solution of \eqref{E:MA}, the two main theoretical results of this paper. An important component for this analysis is the following property of the filtered operator $\Tef$, which mimics \Cref{L:Monotonicity} (monotonicity of $\Tem$) and hinges on the uniform bound of the filter $F$.
	\begin{Lemma}[almost monotonicity of $\Tef$] \label{L:AlmostMon}
		For two grid functions $\mathbf{V}_h,\mathbf{W}_h$ such that $\mathbf{V}_h-\mathbf{W}_h$ attains a maximum at an interior node $x_i \in \Nhi$, we have that
		$$
		\Tef[\mathbf{W}_h](x_i) +2 \tau \geq \Tef[\mathbf{V}_h] (x_i).
		$$
	\end{Lemma}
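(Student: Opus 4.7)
The plan is to split $T_{\varepsilon,f}$ into its two building blocks, the monotone operator $T_{\varepsilon,m}$ and the filter correction $\tau F(\cdot)$, and estimate each contribution separately at the point $x_i$ where the maximum of $\mathbf{V}_h-\mathbf{W}_h$ is attained. Concretely, with $v_h^1,w_h^1\in\Vh^1$ and $v_h^2,w_h^2\in\Vth^2$ denoting the piecewise linear and quadratic reconstructions of $\mathbf{V}_h,\mathbf{W}_h$, I would write
\[
T_{\varepsilon,f}[\mathbf{V}_h](x_i)-T_{\varepsilon,f}[\mathbf{W}_h](x_i)
= \bigl(T_{\varepsilon,m}[v_h^1](x_i)-T_{\varepsilon,m}[w_h^1](x_i)\bigr) + \tau\bigl(F(\alpha_V)-F(\alpha_W)\bigr),
\]
where $\alpha_V,\alpha_W$ denote the respective arguments of $F$ in \eqref{E:FiltOp}.

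The first step is to control the monotone difference. Since $\mathbf{V}_h-\mathbf{W}_h$ attains an interior maximum at $x_i$, the same holds for $v_h^1-w_h^1$ at the node $x_i\in\Nhi$ (both are continuous piecewise linear with identical nodal values at every node of $\Th^1$). Invoking \Cref{L:Monotonicity} (monotonicity of $\Tem$) in the form used throughout \cite{NoNtZh}, which relies only on the nodal maximum principle for $\sdd$ applied to $v_h^1-w_h^1$ (discrete convexity being guaranteed in the relevant setting by \eqref{E:discrete-convex}), one obtains
\[
T_{\varepsilon,m}[v_h^1](x_i)-T_{\varepsilon,m}[w_h^1](x_i)\le 0.
\]

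The second step is the filter contribution. Here I simply invoke the uniform bound $|F(s)|\le 1$ for all $s\in\mathbb R$, which is built into both \eqref{E:Filter} and \eqref{E:NSFilter}. The triangle inequality then yields
\[
\tau\bigl|F(\alpha_V)-F(\alpha_W)\bigr|\le \tau\bigl(|F(\alpha_V)|+|F(\alpha_W)|\bigr)\le 2\tau.
\]

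Combining these two bounds gives $T_{\varepsilon,f}[\mathbf{V}_h](x_i)-T_{\varepsilon,f}[\mathbf{W}_h](x_i)\le 2\tau$, which is the desired inequality. The only subtle point, and what I expect to be the main care in the write-up, is the invocation of the monotonicity of $\Tem$ at the node $x_i$: one must make sure that $v_h^1-w_h^1$ indeed inherits the maximum property at the interior node (trivial from the fact that the piecewise linear reconstruction agrees with the grid function at every node) and that the structural hypotheses of \Cref{L:Monotonicity} are satisfied in this filtered context — essentially the reason why the existence proof of \Cref{L:Existence} is set up so that iterates remain discretely convex by \eqref{E:discrete-convex}.
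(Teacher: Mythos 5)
Your proposal is correct and follows essentially the same route as the paper: decompose $\Tef$ into $\Tem$ plus the filter correction, use \Cref{L:Monotonicity} at the node where $v_h^1-w_h^1$ attains its maximum, and absorb the filter terms via the uniform bound $|F|\le 1$, which contributes at most $2\tau$. The paper phrases this by writing $\Tef[\mathbf{W}_h](x_i)=\Tem[w_h^1](x_i)+k_i\tau$ and $\Tef[\mathbf{V}_h](x_i)=\Tem[v_h^1](x_i)+\tilde k_i\tau$ with $|k_i|,|\tilde k_i|\le 1$, which is the same estimate in slightly different notation.
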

	\begin{proof}
		Let $v_h^1,w_h^1 \in \Vh^1$ and $v_h^2,w_h^2 \in \Vth^2$ be the corresponding functions with the same nodal values as $\mathbf{V}_h$ and $\mathbf{W}_h$, respectively.
		We use the definition \eqref{E:FiltOp} of $\Tef$ and property $|F(s)| \leq 1$ for all $s$ to write
		$$
		\Tef[\mathbf{W}_h](x_i) = \Tem[w_h^1](x_i) + k_i \  \tau, \quad \Tef[\mathbf{V}_h](x_i) = \Tem[v_h^1](x_i) + \tilde k_i \ \tau
		$$
		where $|k_i|,|\tilde k_i| \leq 1$.
		We now apply \cite[Lemma 2.3 (monotonicity)]{NoNtZh} to $\Tem$ and $v_h^1,w_h^1$, whose difference attains a maximum at $x_i$, to deduce that
		$$
		\Tem[w_h^1](x_i)  \geq \Tem[v_h^1](x_i).
		$$
		Combining these expressions, we obtain
		$$
		\Tef[\mathbf{W}_h](x_i) + (\tilde k_i - k_i) \tau \geq \Tef[\mathbf{V}_h] (x_i),
		$$
		whence the assertion follows immediately.
	\end{proof}

	%--------------------------------------------------------------------------	
	\subsection{Existence of Discrete Solution} \label{S:FiltEx}
        %--------------------------------------------------------------------------
	In this section we prove that \eqref{E:FiltOp} has a discrete solution $\Uve$. This hinges on \Cref{L:AlmostMon} (almost monotonicity of $\Tef$) and the existence results in \cite{NoNtZh} for $\Tem$. In fact, we combine the latter with a fixed point argument as in \cite{FrOb2}. Moreover, we show that, although we cannot guarantee uniqueness, we can control the $l^\infty(\Nh)$ difference between distinct discrete solutions.
        
	\begin{Lemma} [existence and stability]\label{L:Existence}
          Let the filter $F_\sigma$ be defined by either \eqref{E:Filter} if $f>0$
        or \eqref{E:NSFilter} if $f\ge 0$. Then, there exists a grid function $\Uve$ that solves \eqref{E:FiltOp} and so that the corresponding function $\uve^1$ is discretely convex. Moreover, $\Uve$ is stable in the sense that $\|\Uve\|_{l^{\infty}(\Nh)}$
		does not depend on the parameters involved in $\epsilon$.
	\end{Lemma}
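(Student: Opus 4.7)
The plan is to reformulate the nonlinear system \eqref{E:FiltOp} as a fixed-point problem for a map built from the well-understood monotone operator $\Tem$, and then invoke Brouwer's theorem on a suitable closed ball. Specifically, for any grid function $\mathbf{V}\in\mathbb{R}^N$ with corresponding $v^1\in\Vh^1$ and $v^2\in\Vth^2$, define
\begin{equation*}
\tilde f_{\mathbf V}(x_i) := f(x_i) - \tau F\left(\frac{\Tea[v^2](x_i)-\Tem[v^1](x_i)}{\tau}\right),\quad x_i\in\Nhi,
\end{equation*}
and let $G(\mathbf V)\in\mathbb{R}^N$ be the grid function whose components satisfy $G(\mathbf V)(x_i)=g(x_i)$ on $\Nhb$ and whose corresponding $\Vh^1$-function $w^1$ is the discretely convex solution of $\Tem[w^1](x_i)=\tilde f_{\mathbf V}(x_i)$ on $\Nhi$, as provided by the existence theory for $\Tem$ in \cite{NoNtZh}. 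Any fixed point of $G$ is a solution of \eqref{E:FiltOp}.

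The first thing to verify is that $G$ is well-defined, that is, that $\tilde f_{\mathbf V}(x_i)\ge0$ so the existence result for $\Tem$ applies and delivers a discretely convex $w^1$. In the case $f\ge f_0>0$ we use the uniform bound $|F_\sigma|\le 1$ to get $\tilde f_{\mathbf V}\ge f_0-\tau$, which is nonnegative thanks to the stipulation $\tau\le f_0$. In the degenerate case $f\ge 0$, the one-sided filter \eqref{E:NSFilter} satisfies $\widetilde F_\sigma(s)\in[-1,0]$ for every $s\in\mathbb R$, so $-\tau\widetilde F_\sigma\ge0$ and $\tilde f_{\mathbf V}\ge f\ge 0$ automatically. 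This is precisely the motivation for the non-symmetric filter discussed in \Cref{S:FilterChoice} and the identity \eqref{E:discrete-convex}. Continuity of $G$ follows from the continuity of $\Tem$, $\Tea$ (as functions of the nodal values) and $F_\sigma$ or $\widetilde F_\sigma$, combined with the continuous dependence of the $\Tem$-solution on its right-hand side established in \cite[Lemma 3.1]{NoNtZh}; this is the place where we exploit that $F_\sigma$ is Lipschitz (so $\sigma>0$ is needed).

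Next we produce the required invariant ball. Since $|F|\le 1$ we have $\|\tilde f_{\mathbf V}\|_{l^\infty(\Nhi)}\le\|f\|_{L^\infty(\Omega)}+\tau$ uniformly in $\mathbf V$. The discrete comparison principle for $\Tem$ (\Cref{L:DCP}) and a standard quadratic barrier of the form $C(\|f\|_\infty+\tau,\mathrm{diam}\,\Omega)|x-x_0|^2$, exactly as used for $\Tem$ in \cite{NoNtZh}, then yield a bound $\|G(\mathbf V)\|_{l^\infty(\Nh)}\le M$ with $M$ depending only on $\|f\|_\infty$, $\|g\|_\infty$, $\mathrm{diam}\,\Omega$ and (weakly) on $\tau$ but not on $h$, $\dm$, $\da$, $\tm$, $\ta$. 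Therefore $G$ maps the closed ball $B_M:=\{\mathbf V\in\mathbb R^N:\|\mathbf V\|_{l^\infty(\Nh)}\le M\}$ continuously into itself, and Brouwer's fixed-point theorem furnishes $\Uve\in B_M$ with $G(\Uve)=\Uve$. By construction $\uve^1$ is discretely convex and $\|\Uve\|_{l^\infty(\Nh)}\le M$, yielding stability.

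The main obstacle, and the delicate conceptual step, is verifying $\tilde f_{\mathbf V}\ge0$ so that the existence and discrete-convexity machinery from \cite{NoNtZh} is available at every iterate of $G$; in the degenerate case $f\ge0$ a symmetric filter would allow $\tilde f_{\mathbf V}<0$ at nodes where $f$ vanishes, which is why \eqref{E:NSFilter} is indispensable. All remaining ingredients (uniform bound on $F$, Lipschitz continuity of $F_\sigma$, discrete comparison, barrier) are imported essentially unchanged from the monotone theory.
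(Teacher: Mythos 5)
Your proposal is correct and follows essentially the same route as the paper: recast \eqref{E:FiltOp} as a fixed-point problem for the map that solves $\Tem[w^1]=f-\tau F(\cdot)$, check nonnegativity of the modified right-hand side separately for $F_\sigma$ with $\tau\le f_0$ and for $\widetilde F_\sigma$ in the degenerate case, invoke the existence/stability and continuous-dependence results for $\Tem$ from \cite{NoNtZh,NoNtZh2}, and conclude with Brouwer. The only cosmetic differences are that the paper applies Brouwer on the compact convex set of discretely convex grid functions with the prescribed boundary values and uniform bound $\Lambda$ (taken directly from \cite[Lemma 3.1]{NoNtZh}) rather than on a closed ball with a re-derived barrier bound.
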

	\begin{proof}
		We first show that for any grid function $\mathbf{U}_h$, with corresponding functions $u_h^1 \in \Vh^1$ discretely convex and $u_h^2 \in \Vth^2$, we can find a grid function $\mathbf{Y}_h(\mathbf{U}_h)$ with corresponding function $y_h^1=y_h^1(\mathbf{U}_h^1) \in \Vh^1$ such that for all $x_i \in \Nhi$
		\begin{equation} \label{E:fixedpointeqn}
	\Tem[y_h^1(\mathbf{U}_h)](x_i) = f(x_i) - \tau F_\sigma\Big(\frac{A[\mathbf{U}_h](x_i)}{\tau} \Big).
		\end{equation}
                We proceed in two steps.

		{\it Step 1: existence of $y_h^1$.} Let $\mathbf{U}_h$ be a grid function with corresponding function $u_h^1 \in \Vh^1$ discretely convex, or equivalently $\Tem[u_h^1](x_i)\ge0$ for all $x_i\in\Nhi$. If $f\ge f_0>0$, then for $0<\tau\le f_0$ we infer that
                  \[
                  f(x_i) - \tau F_\sigma\Big(\frac{A[\mathbf{U}_h](x_i)}{\tau} \Big) \ge 0\quad\forall \, x_i\in\Nhi.
                  \]
                  On the other hand, if $f\ge0$ then \eqref{E:NSFilter} implies $F_\sigma(s)\le0$ for all $s$ and the above inequality holds again. We next extend $F_\sigma\big(\tau^{-1}A[\mathbf{U}_h](x_i)\big)$ as a continuous piecewise linear function to $\Omega_h$ and apply the existence result for $\Tem$ from \cite[Lemma 3.1]{NoNtZh} to conclude that there exists a unique solution $y_h^1(\mathbf{U}_h) \in \Vh^1$ to \eqref{E:fixedpointeqn} with $\|y_h^1(\mathbf{U}_h)\|_{L^{\infty}} \leq \Lambda$ and $\Lambda$ depends on $\|g\|_{L^{\infty}(\partial\Omega)}$ and $\|f\|_{L^{\infty}(\Omega)}$. We have thus constructed a grid function $\mathbf{Y}_h(\mathbf{U}_h)$ with nodal values given by $y_h^1(\mathbf{U}_h)$ and such that $\|\mathbf{Y}_h(\mathbf{U}_h)\|_{\ell^\infty(\Nh)} \le \Lambda$.

\smallskip
	        {\it Step 2: fixed point argument.} Since $F_\sigma$ is continuous for any $\sigma>0$, and the solution of \eqref{E:fixedpointeqn} depends continuously on data in $L^\infty(\Omega_h)$, according to \cite[Proposition 4.6]{NoNtZh2}, we deduce that the map $\mathbf{U}_h \mapsto \mathbf{Y}_h(\mathbf{U}_h)$ is continuous. In addition, the set of grid functions $\mathbf{U}_h$ with corresponding discretely convex $u_h^1 \in \Vh^1$ that satisfy both the boundary condition $\mathbf{U}_h(x_i)=g(x_i)$ for all $x_i\in\Nhb$ as well as the uniform bound $\|\mathbf{U}_h\|_{l^{\infty}(\Nh)} \leq \Lambda$, is compact and convex. Since $\mathbf{U}_h \mapsto \mathbf{Y}_h(\mathbf{U}_h)$ maps this set into itself, we can apply the Brouwer's fixed point theorem to find $\mathbf{U}_h$ such that $\mathbf{Y}_h(\mathbf{U}_h)=\mathbf{U}_h$ which is thus a solution to \eqref{E:FiltOp}. This concludes the proof.
\end{proof}

        \begin{remark}[non-uniqueness] We emphasize that the above proof does not guarantee the existence of a unique solution to \eqref{E:FiltOp}, since in principle we can have more than one fixed points for \eqref{E:fixedpointeqn}. However, the next lemma shows that two different solutions of \eqref{E:FiltOp} are very close to each other. Their distance in the $l^\infty$ norm is dictated by the filter scale $\tau$.

\end{remark}	
\begin{Lemma}[control of the lack of uniqueness] Let $\Uve, \Vve$ be two discrete solutions of \eqref{E:FiltOp} with corresponding discretely convex functions $\uve^1,\vve^1\in\Vh^1$. Then,
	$$
\|\Uve-\Vve\|_{l^\infty(\Omega)} \leq C \ \tau^{1/d},
	$$
	where $C$ depends only on the dimension $d$ and $\Omega$.
\end{Lemma}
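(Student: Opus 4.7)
The plan is to sandwich both filtered-scheme solutions between two pure monotone-scheme solutions whose right-hand sides differ only by $2\tau$, and then invoke a discrete Alexandroff-Bakelman-Pucci-type stability estimate for $\Tem$ to bound their gap. The almost monotonicity of $\Tef$ (\Cref{L:AlmostMon}) by itself is not enough here: testing it at a maximum of $\uve^1-\vve^1$ only returns the trivial inequality $f+2\tau \geq f$, so one must exploit the finer structure of the monotone operator.

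First, I use the uniform bound $|F_\sigma(s)| \leq 1$ for all $s\in\mathbb{R}$, valid for both \eqref{E:Filter} and \eqref{E:NSFilter}, to convert $\Tef[\Uve](x_i)=f(x_i)$ into the pointwise inequality $|\Tem[\uve^1](x_i) - f(x_i)| \leq \tau$ for every $x_i \in \Nhi$, and likewise for $\vve^1$. Combined with the discrete convexity of $\uve^1$ and $\vve^1$, which forces $\Tem[\uve^1], \Tem[\vve^1] \geq 0$, this pins both monotone operators inside the window $[\max(0,f-\tau),\,f+\tau]$.

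Second, I introduce two auxiliary discretely convex functions $\tilde u, \underline u \in \Vh^1$ by solving the purely monotone problems $\Tem[\tilde u](x_i) = f(x_i)+\tau$ and $\Tem[\underline u](x_i) = \max(0,f(x_i)-\tau)$ on $\Nhi$, both carrying the boundary data $g$ on $\Nhb$. Existence and discrete convexity follow from \cite[Lemma 3.1]{NoNtZh}. Applying the discrete comparison principle \Cref{L:DCP} to the pairs $(\tilde u,\uve^1)$, $(\tilde u,\vve^1)$, $(\uve^1,\underline u)$, $(\vve^1,\underline u)$, I obtain $\tilde u \leq \uve^1, \vve^1 \leq \underline u$ at every node, whence
\[
\|\Uve-\Vve\|_{l^\infty(\Nh)} \leq \|\underline u - \tilde u\|_{L^\infty(\Omega_h)}.
\]

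Third, I invoke the continuous dependence of the monotone scheme on the right-hand side in the form of \cite[Proposition 4.6]{NoNtZh2} (already used in \Cref{L:Existence}): an $L^\infty$ perturbation of order $\eta$ of the data induces a perturbation of order $\eta^{1/d}$ of the solution, mirroring the classical Alexandroff-Bakelman-Pucci estimate. Since the data of $\tilde u$ and $\underline u$ differ by at most $2\tau$ in $L^\infty$, this yields
\[
\|\underline u - \tilde u\|_{L^\infty(\Omega_h)} \leq C(d,\Omega)\, \tau^{1/d},
\]
completing the proof. The main obstacle is this last step, which is a discrete ABP-type estimate for $\Tem$; by contrast, the first two steps are routine consequences of the uniform bound on the filter, the discrete convexity of both solutions, and the monotone discrete comparison principle.
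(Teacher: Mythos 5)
Your proof is correct, and its first two steps (converting $|F_\sigma|\le 1$ into $|\Tem[\uve^1](x_i)-f(x_i)|\le\tau$, and your observation that \Cref{L:AlmostMon} alone only yields the vacuous $f+2\tau\ge f$) match the paper's reasoning exactly. Where you diverge is in the organization of the quantitative step. You sandwich $\uve^1,\vve^1$ between two auxiliary monotone solutions $\tilde u,\underline u$ of $\Tem[\cdot]=f+\tau$ and $\Tem[\cdot]=\max(0,f-\tau)$ and then appeal to an ABP-type stability estimate ($L^\infty$ data perturbation $\eta$ gives solution perturbation $\eta^{1/d}$) as a black box. The paper skips the auxiliary solutions entirely: it compares $\uve^1$ directly with $\vve^1+\frac{(2\tau)^{1/d}}{2}q_h$, where $q_h=\interpo(|x-x_0|^2-R^2)$ is the explicit quadratic barrier of \cite[Lemma 5.2]{NoNtZh}, using that $\sdd q_h(x_i;v)\ge 2$ and hence $\Tem[w+cq_h]\ge\Tem[w]+(2c)^d$ for discretely convex $w$, followed by one application of \Cref{L:DCP}. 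The two arguments are mathematically the same at their core --- the barrier-plus-comparison computation is precisely the proof of the stability estimate you invoke --- but the paper's version is shorter and self-contained, needing neither the existence of $\tilde u,\underline u$ nor an external citation. One caution: within this paper, \cite[Proposition 4.6]{NoNtZh2} is cited only for \emph{qualitative} continuous dependence on the data (in the proof of \Cref{L:Existence}); you are asserting, not verifying, that it carries the quantitative $\eta^{1/d}$ rate. You correctly flag this as the crux, and the estimate is indeed obtainable by the barrier argument just described, but as written your proof outsources exactly the step that the paper actually proves.
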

\begin{proof}
  The result is an immediate consequence of \cite[Lemma 2.4 (discrete comparison principle)]{NoNtZh} for $\Tem$ and the fact that \eqref{E:discrete-convex} implies
  \[
  \Tem[\uve^1](x_i) \geq 0,~ \Tem[\vve^1](x_i) \geq 0
  \quad\forall \, x_i \in \Nhi.
  \]
In fact, we use the discrete barrier $q_h = \interpo (|x-x_0|^2-R^2) \in \Vh^1$, introduced in \cite[Lemma 5.2]{NoNtZh}, where $x_0$ and $R>0$ are such that $\Omega \subset B_R(x_0)$. Since $\Tef[\Uve](x_i) = \Tef[\Vve](x_i) = f(x_i)$ for all $x_i \in \Nhi$, we have that
	$$
	\Tem[\uve^1] (x_i)\leq \Tem[\vve^1](x_i) + 2 \tau \leq \Tem\Big[\vve^1+\frac{(2\tau)^{1/d}}{2}q_h\Big](x_i)
	$$
	and
	$$
		\Tem[\vve^1](x_i) \leq \Tem[\uve^1](x_i) + 2 \tau \leq \Tem\Big[\uve^1+\frac{(2\tau)^{1/d}}{2}q_h\Big](x_i)
		$$
		for all $x_i \in \Nhi$. Applying \cite[Lemma 2.4]{NoNtZh}, we obtain
		$$
		\|\uve^1-\vve^1\|_{L^\infty(\Omega_h)} \leq \frac{(2\tau)^{1/d}}{2} \|q_h\|_{L^\infty(\Omega_h)} \leq C \tau^{1/d},
		$$
		which concludes the proof.
	\end{proof}

%---------------------------------------------------------------------------------
\subsection{Convergence} \label{S:Convergence}
%---------------------------------------------------------------------------------
	We now prove convergence of the function $\uve^1$ associated with the solution $\Uve$ of \eqref{E:FiltOp} to the unique viscosity $u$ solution of \eqref{E:MA}. To this end, we follow the proof of convergence of \cite[Theorem 5.7]{NoNtZh}, which in turn modifies that of \cite{BaSoug} to account for the Dirichlet boundary conditions and the lack of operator consistency near $\partial \Omega_h$. In addition, we exploit the almost monotone nature of the scheme, as in \cite{FrOb2}, to further adjust the proof of \cite{BaSoug} and derive uniform convergence of $\uve^1$ to $u$ in $\Omega$. As in \cite{NoNtZh}, we also resort to a discrete barrier argument to control the behavior of the discrete solution close to the boundary. We start with the discrete barrier function from \cite[Lemma 5.1]{NoNtZh}.
	\begin{Lemma}[discrete boundary barrier] \label{L:Barrier} Let $\Omega$ be uniformly convex and  $E>0$ be arbitrary. For each node $z \in \Nhi$ with ${\rm dist}(z,\partial \Omega_h) \leq \delta$, there exists a function $p_h\in\Vh^1$ such that $\Tem[p_h](x_i) \geq E$ for all $x_i \in \Nhi$, $p_h \leq 0$ on $\partial \Omega_h$ and
		\[
		|p_h(z)| \leq CE^{1/d} \delta,
		\]
		with $C$ depending on the curvature of the boundary.
	\end{Lemma}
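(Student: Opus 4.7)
The plan is to lift a continuous quadratic barrier, constructed from the uniform convexity of $\Omega$, to the discrete setting by Lagrange interpolation into $\Vh^1$.

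First, using uniform convexity I would locate the boundary point $y_0 \in \partial \Omega$ closest to $z$ together with an enclosing ball $B_R(x_0) \supset \Omega$ tangent to $\partial \Omega$ at $y_0$, where $R$ depends only on the curvature of $\partial \Omega$. I then set
\[
p(x) := \alpha \left( |x-x_0|^2 - R^2 \right),
\qquad
\alpha := \tfrac12 \big(E(1+\eta)\big)^{1/d},
\]
for a small fixed $\eta > 0$ to be chosen below. Since $D^2 p = 2\alpha I$, we have $\det D^2 p \equiv (2\alpha)^d = E(1+\eta)$ and $p \le 0$ on $\overline{\Omega}$, in particular on $\partial \Omega_h$.

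Second, the key estimate $|p(z)| \le C E^{1/d} \delta$ follows from $p(y_0)=0$ combined with the Lipschitz bound $|\nabla p| \le 2\alpha R$ on $\overline{\Omega}$ and the geometric observation $|z-y_0| = \mathrm{dist}(z,\partial \Omega) \le C \delta$. The latter uses the hypothesis $\mathrm{dist}(z,\partial \Omega_h) \le \delta$ together with $\Nhb \subset \partial \Omega$, so that $\partial \Omega_h$ and $\partial \Omega$ are close in Hausdorff distance.

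Third, I take $p_h := \interpo p \in \Vh^1$ and apply the consistency estimate \Cref{L:FullConsistencyMon} to the quadratic $p$. Because all third and higher derivatives of $p$ vanish, the consistency error collapses to the interpolation contribution $C(d,\alpha)\big(h^2/\dm^2 + \theta_m^2\big)$, which under the asymptotic regime $h^2/\dm^2,\theta_m^2 \to 0$ can be made smaller than $E\eta$ by fixing $\eta$ appropriately. This yields $\Tem[p_h](x_i) \ge E$ on every interior node in $\Omega_{h,\dm}$.

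The main obstacle is extending $\Tem[p_h](x_i) \ge E$ to interior nodes within the boundary layer $\Nhi \setminus \Omega_{h,\dm}$, where some stencil points $x_i \pm \dm v_j$ may fall outside $\Omega_h$ and the off-the-shelf consistency bound no longer applies. Exploiting the explicit quadratic form of $p$, however, each well-defined central second difference $\sdd p_h(x_i;v_j)$ differs from the exact value $2\alpha$ only by the piecewise-linear interpolation error of order $\alpha h^2 / \dm^2$, so the product structure of $\Tem$ still delivers the desired lower bound after absorbing the error into $\eta$. This is precisely the boundary-layer argument carried out in the proof of \cite[Lemma 5.1]{NoNtZh}, which I would follow verbatim.
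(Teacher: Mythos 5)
The paper itself does not prove this lemma --- it imports it verbatim from \cite[Lemma 5.1]{NoNtZh} --- and your skeleton (quadratic barrier $p(x)=\alpha(|x-x_0|^2-R^2)$ built from an enclosing ball tangent to $\partial\Omega$ at the boundary point nearest $z$, interpolation into $\Vh^1$, and the bound $|p(z)|\le 2\alpha R\,\mathrm{dist}(z,\partial\Omega)\le CE^{1/d}\delta$) matches that construction. The estimate of $|p_h(z)|$ is fine; note that $p_h(z)=p(z)$ exactly since $z$ is a node, and that $p_h\le0$ on $\partial\Omega_h$ needs one extra line (it follows because boundary faces have their vertices in $\Nhb\subset\partial\Omega$ where $p\le0$, not because $\interpo p\le p$ --- the inequality goes the other way for convex $p$).

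The genuine gap is in the lower bound $\Tem[p_h](x_i)\ge E$ on the boundary layer, which you yourself identify as the main obstacle. There the stencil length must be reduced to $\hat\delta=\rho\dm$ so that $x_i\pm\hat\delta v_j\in\overline\Omega_h$, and $\rho$ can be as small as $O(h/\dm)$ for nodes adjacent to $\partial\Omega_h$; the interpolation contribution to the second difference is then of order $\alpha h^2/\hat\delta^2$, not $\alpha h^2/\dm^2$, and this can be $O(\alpha)$ --- the same size as the main term $2\alpha$ --- so it cannot be absorbed into the slack $E\eta$ in any asymptotic regime. Deferring at exactly this point to ``the boundary-layer argument of \cite[Lemma 5.1]{NoNtZh}, followed verbatim'' is circular, since that is the proof being reconstructed. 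The observation that closes the gap (and which the companion paper exploits, as is visible from the use of $q_h=\interpo(|x-x_0|^2-R^2)$ in the control-of-non-uniqueness lemma of \Cref{S:FiltEx}) is a one-sided, sign-based bound: since $p$ is convex, $e:=\interpo p-p\ge0$ everywhere while $e(x_i)=0$ at every node, so $\sdd e(x_i;v)\ge 0$ for any admissible step; hence
\[
\sdd p_h(x_i;v)\;\ge\;\sdd p(x_i;v)\;=\;v^TD^2p\,v\;=\;2\alpha
\]
exactly, at every $x_i\in\Nhi$ and every unit direction $v$, with no consistency error at all (central differences are exact on quadratics, and $D^2p=2\alpha I$ gives the same value in every unit direction, so exact orthonormality of the discrete frames is irrelevant). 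Then $p_h$ is discretely convex and $\Tem[p_h](x_i)\ge(2\alpha)^d$, so the choice $\alpha=\tfrac12E^{1/d}$ suffices: the detour through \Cref{L:FullConsistencyMon}, the auxiliary parameter $\eta$, and the asymptotic smallness of $h/\dm$ and $\tm$ are all unnecessary once this is in place.
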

	Before proceeding further, we recall the following continuous version of the \MA operator 
	\begin{equation*}
	T[u] := \min_{\bv=(v_j)_{j=1}^d\in\Vperp} \left( \prod_{j=1}^d \partial^{2,+}_{v_jv_j} u
	- \sum_{j=1}^d \partial^{2,-}_{v_jv_j}u \right),
	\end{equation*}  
	where $\partial^{2,+}_{v_jv_j} u := \max\big( \partial^2_{v_jv_j} u,0\big)$ and $\partial^{2,-}_{v_jv_j} u := -\min\big( \partial^2_{v_jv_j} u,0\big)$. The following equivalence between convex viscosity solutions of \eqref{E:MA} and viscosity solutions of
	\begin{equation} \label{E:MAFull}
	T[u] = f \quad\textrm{in }\Omega,
	\qquad
	u = g \quad\textrm{on }\partial\Omega,
	\end{equation}
	is proven in \cite[Lemma 5.6]{NoNtZh}.
          
	\begin{Lemma}[equivalence of viscosity solutions] \label{L:EquivalenceVisc}
		If $f\in C(\Omega)$ satisfies $f \geq 0$, and $u\in C(\overline{\Omega})$,
		then $u$ is a
		viscosity solution of \eqref{E:MAFull} if and only if $u$ is a convex
		viscosity solution of \eqref{E:MA}.
	\end{Lemma}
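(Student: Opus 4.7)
The plan is to reduce the equivalence to a pointwise algebraic dichotomy for the operator $T$ applied to a symmetric matrix $M$, and then check how it interacts with the two viscosity definitions. If $M\ge 0$, then all negative parts vanish and $T$ reduces to $\min_{\mathbf{v}\in\Vperp}\prod_{j=1}^d v_j^T M v_j$, which equals $\det M$ by \eqref{E:Det}. If instead $M$ has some eigenvalue $\lambda_j<0$, then evaluating the minimization at the eigenvector basis makes $\prod_j\lambda_j^+$ vanish while $\sum_j\lambda_j^->0$, so $T(M)<0$. Hence $T(M)\ge 0 \iff M\ge 0$, and on $\{M\ge 0\}$ one has $T(M)=\det M$.

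For the direction from a convex viscosity solution of \eqref{E:MA} to a viscosity solution of \eqref{E:MAFull}, I would treat the two viscosity tests separately. For the subsolution, let $\varphi\in C^2$ with $u-\varphi$ attaining a local maximum at $x_0$; convexity of $u$ makes $\big(u(x_0+hv)+u(x_0-hv)-2u(x_0)\big)/h^2\ge 0$ for every direction $v$ and small $h>0$, and $u\le\varphi$ with equality at $x_0$ passes this non-negativity to the $\varphi$ side, giving $v^T D^2\varphi(x_0) v\ge 0$. Thus $D^2\varphi(x_0)\ge 0$, and combining the algebraic identity with the subsolution property for $\det D^2 u=f$ yields $T[\varphi](x_0)=\det D^2\varphi(x_0)\ge f(x_0)$. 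For the supersolution, I split on the sign of $D^2\varphi(x_0)$: if it is PSD, the supersolution test for $\det=f$ using $\varphi$ as a convex $C^2$ test function from below, together with the identity, gives $T[\varphi](x_0)=\det D^2\varphi(x_0)\le f(x_0)$; otherwise $T[\varphi](x_0)<0\le f(x_0)$ holds automatically since $f\ge 0$.

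The converse direction is more delicate. The subsolution condition $T[\varphi](x_0)\ge f(x_0)\ge 0$ at every touching point forces $D^2\varphi(x_0)\ge 0$ for every $C^2$ test function $\varphi$ from above, which is exactly the viscosity expression of $D^2 u\ge 0$. Specialising to a single direction $v$ and considering the 1D slice $g(t):=u(x_0+tv)$, standard one-dimensional viscosity theory turns $g''\ge 0$ in the viscosity sense into ordinary convexity of $g$; convexity along every line segment then delivers convexity of $u$ itself. Once convexity of $u$ is in hand, the transfer of the subsolution and supersolution properties to $\det D^2 u=f$ is immediate from the dichotomy (for the supersolution only convex $C^2$ test functions from below are admissible, on which $T=\det D^2$ by \eqref{E:Det}). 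The main obstacle is this convexity step: turning the pointwise viscosity condition $D^2 u\ge 0$ into joint convexity of $u$ via the 1D slicing. The rest is bookkeeping, since the boundary condition $u=g$ on $\partial\Omega$ is identical in both formulations.
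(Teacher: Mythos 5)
The paper itself gives no proof of this lemma---it defers entirely to \cite[Lemma 5.6]{NoNtZh}---and your argument reconstructs the essential mechanism of that proof: the pointwise dichotomy $T(M)\ge 0 \iff M\succeq 0$ together with $T(M)=\det M$ on positive semidefinite matrices, combined with the fact that the viscosity inequality $D^2u\ge 0$ (extracted from the subsolution property of $T[u]=f\ge 0$) forces convexity of $u$. Your outline is correct and you have rightly isolated the only nontrivial ingredient, namely the convexity step; it is a standard result, and the 1D slicing you propose does close once one adds the usual large quadratic penalization in the directions transverse to the line so that a test function for the slice becomes an admissible test function for $u$ (and, in the supersolution transfer, one splits between $\det D^2\varphi(x_0)=0$ and $D^2\varphi(x_0)>0$, in which latter case $\varphi$ is genuinely convex near $x_0$).
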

        
	We are now in a position to prove the uniform convergence of $\uve^1\in\Vh^1$ in $\Omega$. Since $\uve^1$ is defined in the computational domain $\Omega_h$,
		and $\Omega_h\subset\Omega$, we extend $\uve^1$ to $\Omega$ as
		follows. Given $x\in\Omega\setminus \Omega_h$ let
		$z\in\partial\Omega_h$ be the closest point to $x$, which is unique
		because $\Omega_h$ is convex, and let
		\begin{equation}\label{extension}
		\uve^1 (x) := \uve^1(z) = \interpo g(z)
		\quad\forall \, x \in \Omega\setminus\Omega_h.
		\end{equation}
	This will allow control of the behavior of $\uve^1$ close to $\partial\Omega$ using techniques from \cite{NoNtZh}.
	
	We also introduce the limit supremum and the limit infimum of $\uve^1$, namely
	$$
	u^*(x) = \limsup_{\varepsilon \to 0, z \to x} \uve^1(z), 
	\qquad
	u_*(x) = \liminf_{\varepsilon \to 0, z \to x} \uve^1(z)
	\quad \forall x \in \Omega,
	$$
	where we require without explicit statement that $\frac{h}{\dm}, \frac{h}{\da} \to 0$ as $h \to 0$. We observe that $u^*$ is upper semi-continuous and $u_*$ is lower semi-continuous. Since the proof follows closely the one in \cite{NoNtZh}, we emphasize only the parts of it that are different for the filtered operator. In the following calculations we do not rely on the precise definition of the filter function $F_\sigma$ but use \Cref{L:AlmostMon} (almost monotonicity of $\Tef$).
	
	\begin{Theorem}[uniform convergence] \label{T:Convergence}
		Let $\Omega$ be uniformly convex, $f\in C(\overline{\Omega}) \cap L^\infty(\Omega)$
		satisfy $f\ge 0$, and $g\in C(\partial\Omega)$.
		The function $\uve^1\in\Vh^1$ of (\ref{E:FiltOp}) converges uniformly to the unique 
		viscosity solution $u\in C(\overline{\Omega})$ of \eqref{E:MA}
		as $\ve = \ve(h) \rightarrow 0$.
	\end{Theorem}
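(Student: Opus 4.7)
The plan is to adapt the Barles-Souganidis framework as carried out in Theorem 5.7 of \cite{NoNtZh}, replacing exact monotonicity by the almost monotonicity of \Cref{L:AlmostMon}, along the lines of \cite{FrOb2}. Define $u^*$ and $u_*$ as in the text; stability from \Cref{L:Existence} implies they are bounded upper/lower semicontinuous functions on $\overline\Omega$. I will show that $u^*$ is a viscosity subsolution and $u_*$ a viscosity supersolution of \eqref{E:MAFull}. By \Cref{L:EquivalenceVisc} and the comparison principle for viscosity solutions of \eqref{E:MAFull}, this gives $u^*\le u\le u_*$ on $\overline\Omega$. Since $u_*\le u^*$ always holds, we conclude $u^*=u_*=u$, which classically implies uniform convergence of $\uve^1$ to $u$ on $\overline\Omega$.

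\textbf{Boundary behavior.} I first verify $u^*(x)\le g(x)\le u_*(x)$ for $x\in\partial\Omega$. The extension \eqref{extension} reduces this to controlling $|\uve^1-\interpo g|$ in a $\dm$-neighborhood of $\partial\Omega_h$. I would invoke the discrete barrier of \Cref{L:Barrier} together with the discrete comparison principle \Cref{L:DCP} for $\Tem$, exactly as in \cite{NoNtZh}. The only adjustment needed is that $\Tef[\Uve]$ differs from $\Tem[\uve^1]$ by at most $\tau$ thanks to $|F_\sigma|\le 1$ in \eqref{E:FiltOp}, so barriers built for $\Tem$ transfer to the filtered scheme up to an $O(\tau)$ perturbation that vanishes as $\ve\to 0$.

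\textbf{Subsolution/supersolution at interior points.} Let $\varphi\in C^2(\Omega)$ be strictly convex and assume $u^*-\varphi$ attains a strict local maximum at $x_0\in\Omega$. A standard extraction produces a sequence $x_\ve\in\Nhi$ with $x_\ve\to x_0$ such that the grid function $\Uve-\boldsymbol{\Phi}_h$ attains a local maximum at $x_\ve$, where $\boldsymbol{\Phi}_h$ is the grid function induced by $\interpo\varphi$. Applying \Cref{L:AlmostMon} with $\mathbf{V}_h=\Uve$ and $\mathbf{W}_h=\boldsymbol{\Phi}_h$ yields
\[
f(x_\ve)=\Tef[\Uve](x_\ve)\le \Tef[\boldsymbol{\Phi}_h](x_\ve)+2\tau.
\]
Since $|F_\sigma|\le 1$ in \eqref{E:FiltOp} gives $\Tef[\boldsymbol{\Phi}_h]\le \Tem[\interpo\varphi]+\tau$, and \Cref{L:FullConsistencyMon} applied to the smooth function $\varphi$ gives $\Tem[\interpo\varphi](x_\ve)=T[\varphi](x_\ve)+o(1)$ as $\ve\to 0$, we obtain $f(x_\ve)\le T[\varphi](x_\ve)+3\tau+o(1)$. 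Passing to the limit and using $\tau(\ve)\to 0$, continuity of $f$ and of $T[\varphi]$, we conclude $T[\varphi](x_0)\ge f(x_0)$. The supersolution property for $u_*$ follows by the symmetric argument: swap the roles of $\Uve$ and $\boldsymbol{\Phi}_h$ in \Cref{L:AlmostMon} and use $\Tef[\boldsymbol{\Phi}_h]\ge \Tem[\interpo\varphi]-\tau$. Notably, the accurate operator $\Tea$ does not appear explicitly in this argument: only the uniform bound on $F_\sigma$ and the consistency of $\Tem$ are needed.

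\textbf{Main obstacle.} The technical issue to navigate is that both \Cref{L:Monotonicity} (underlying \Cref{L:AlmostMon}) and \Cref{L:FullConsistencyMon} rely on discrete convexity. The filtered solution produces a discretely convex $\uve^1$ by \Cref{L:Existence} thanks to \eqref{E:discrete-convex}, and $\interpo\varphi$ is discretely convex once $h$ is small enough because $\varphi$ is strictly convex. Reducing general $C^2$ test functions to strictly convex ones is the standard perturbation $\varphi\mapsto\varphi+\eta|x-x_0|^2$ together with convexity of $u$, sending $\eta\to 0$ at the end. Beyond this, the proof is bookkeeping: the extra $2\tau$ from almost monotonicity and the $\tau$ from the filter bound are absorbed harmlessly in the limit $\ve\to 0$, since $\tau=\tau(\ve)\to 0$ by construction.
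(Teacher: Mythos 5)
Your proposal is correct and follows essentially the same route as the paper: the Barles--Souganidis framework with \Cref{L:AlmostMon} replacing exact monotonicity, consistency of $\Tem$ plus the $O(\tau)$ filter bound at the maximum point of $\Uve-\boldsymbol{\Phi}_h$, a barrier argument for the boundary values, and the comparison principle together with \Cref{L:EquivalenceVisc}. The only (harmless) deviations are that the paper runs the boundary step via the maximum principle implied by \Cref{L:Monotonicity} rather than \Cref{L:DCP}, and it does not spell out the reduction to locally strictly convex test functions that you flag as the main obstacle.
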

	\begin{proof}
		In view of \Cref{L:EquivalenceVisc} (equivalence of viscosity solutions),
		we prove instead that $\uve^1$ converges to the viscosity solution $u$ of \eqref{E:MAFull} uniformly.
		To this end, we have to deal with a test function $\phi\in C^2(\Omega)$
		and the respective grid function $\mathbf{\Phi}_h$ with corresponding piecewise polynomial functions $\phi_h^1 = \interpo \phi \in \Vh^1$ and $\phi_h^2 = \interpt \phi \in \Vth^2$. Without loss of
		generality we may assume $\phi \in C^{2,\alpha}(\Omega)$.
		We split the proof into five steps. 
		
		\smallskip  
		\textit{Step 1: Consistency.}
                Let $x_0\in\Omega$ and $x_i\in\Nhi\cap\Omega_{h,\delta_m}$.
		We have the following consistency estimate for the operator $T$ in \eqref{E:MAFull}, which is an immediate consequence of \Cref{L:FullConsistencyMon} (consistency of $\Tem[\interpo u]$), the Lipschitz continuity of the min and max functions, and the fact that $|\Tef[\mathbf{\Phi}_h](x_i)-\Tem[\phi_h^1](x_i)|\leq \tau$:
		$$
		\big| T[\phi](x_0) - \Tef[\mathbf{\Phi}_h](x_i)  \big|
		\le C_1(\phi) \Big(\delta_m^\alpha + |x_0-x_i|^\alpha \Big) +
		C_2(\phi) \Big( \frac{h^2}{\delta_m^2} + \theta_m^2 \Big) + \tau.
		$$
		Here the constants $C_1, C_2$ are defined in \Cref{L:FullConsistencyMon}
		and depend on
		$|\phi|_{C^{2,\alpha}(B_i)}$ and $|\phi|_{W^2_\infty(B_i)}$ with $B_i$
		defined in \eqref{E:Bi}.
		
		\smallskip  
		\textit{Step 2: Subsolutions.} We show that $u^*$ is a
		viscosity subsolution of \eqref{E:MAFull};
		likewise $u_*$ is a viscosity supersolution.
		This hinges on monotonicity and consistency \cite{BaSoug}. In our case, we employ \Cref{L:Monotonicity} (monotonicity of $\Tem$) and \Cref{L:AlmostMon} (almost monotonicity of $\Tef$). We must show that if $u^* - \phi$ attains a local maximum at $x_0\in\Omega$, we have
		$$
		T[\phi](x_0) \geq f(x_0);
		$$
		note that $u^*-\phi$ is upper semi-continuous and the local maximum
		is well defined.
		Without loss of generality, we may assume that $u^* - \phi$ attains a
		strict global maximum at $x_0$ \cite[Remark in p.31]{IL} and $x_0 \in \Omega_h$ for $h$ sufficiently small.
		Let $x_h\in\mathcal{N}_h$ be a sequence of nodes so that $\Uve - \mathbf{\Phi}_h$
		attains a maximum at $x_h$. We claim that, as in \cite{NoNtZh}, $x_h \to x_0$ as $h \to 0$.
		Exploiting the fact that $\Uve - \mathbf{\Phi}_h$ attains a maximum at $x_h$,
		\Cref{L:AlmostMon} (almost monotonicity of $\Tef$) yields
		$$
		\Tef[\mathbf{\Phi}_h](x_h) + 2\tau \geq \Tef[\uve](x_h) = f(x_h),
		$$
		where $\tau \to 0$ as $\ve \to 0$.
		Since $f\in C(\overline{\Omega})$, to prove $T[\phi](x_0) \ge f(x_0)$                
		we only need to show that as $\varepsilon \to 0$
		$$
		\Tef[\mathbf{\Phi}_h](x_h) \to T[\phi](x_0).
		$$
		This is a consequence of Step 1 and the fact that
		$x_h\in\Omega_{h,\dm}$ for $\dm$ sufficiently small, because $x_0\in\Omega$, $x_h \to x_0$
			and the sequence of $\Omega_h\uparrow\Omega$ is non-decreasing.
		
		\smallskip
		\textit{Step 3: Boundary Behavior.}
		We now prove that $u^*=u_*=g$ on $\partial\Omega$ via a barrier
		argument similar to those in \cite{FeJe,NoNtZh,NoZh};
		we proceed as in \cite{FeJe}. This is essential  in order to apply the comparison principle for operator $T$ to relate	$u_*, u^*$ and $u$ in Step 4.

		Let $p_k$ be the quadratic function in the proof of
		\Cref{L:Barrier} (discrete boundary barrier)
		associated with an arbitrary boundary point $x\in\partial\Omega$
		(the origin in the construction of $p_k$) and with constant $E=k$. We
		recall that $p_k(x) =0$ and $p_k(z) \leq 0$ for all $z \in \partial\Omega$
		can be made arbitrarily large for $k\to\infty$ by virtue of the
		uniform convexity of $\Omega$. A simple consequence is that the
		sequence of points $x_k\in\partial\Omega$ where $g+p_k$
		(resp. $g-p_k$) attains a maximum (resp. a minimum)
		over $\partial\Omega$ converges to $x$ as $k \to \infty$.
		
		\Cref{L:Monotonicity} (monotonicity of $\Tem$) implies the following maximum principle for the 
		monotone operator:
		if $\Tem[u_h^1](x_i)>0$ for all $x_i\in\Nhi$ is valid for a discretely convex function $u_h^1 \in \Vh^1$,	then $u_h^1$ attains a maximum over $\Omega_h$ on $\Nhb \subset \partial\Omega$.
		We now see that, since $\Tem[\uve^1] \geq 0$, we have that for $k$ big enough and all $x_i \in \Nhi$
			$$
			\begin{aligned}
			\Tem[\uve^1 + \interpo p_k](x_i)  &\geq \Tem[\uve^1]+\Tem[\interpo p_k](x_i) \\
			& \geq \Tef[\Uve](x_i) - \tau + E  \\ 
			&= f(x_i) -\tau +E >0,
			\end{aligned}
			$$
		whence $\uve^1 + \interpo p_k$
		attains its maximum on $\Nhb$. In view of \eqref{extension}, we
			may assume $z\in\Omega_h$ in the limit $u^*(x) = \limsup_{\varepsilon, \frac{h}{\delta} \to 0, z \to x}\uve^1(z)$. Consequently,
		\begin{align*}
		u^*(x) & \le \limsup_{\varepsilon \to 0, z \to x}
		\big(\uve^1(z)+\interpo p_k(z) \big) - \liminf_{\varepsilon
			\to 0, z \to x} \interpo p_k(z) \\
		& \le   \limsup_{\varepsilon \to 0} \
			\max_{z \in \Nhb }  \big(g + p_k\big) (z) -  p_k(x) 
		\le g(x_k) + p_k(x_k) \le g(x_k),
		\end{align*}
		because $\max_{\Nhb} g+p_k \le \max_{\partial \Omega } g + p_k$. Hence taking $k \to \infty$ yields  $u^*(x) \leq g(x)$.
		
		On the other hand, since $\Tem[\interpo p_k](x_i) > \Tem[\uve^1](x_i)$
		for all $x_i\in\Nhi$ and $k$ big enough, Lemma \ref{L:Monotonicity} (monotonicity of $\Tem$) implies that
		$\uve^1-\interpo p_k$ attains a minimum on $\Nhb$. Therefore,
		arguing as before
		\begin{align*}
		u_*(x)\ge \liminf_{\varepsilon \to 0}
			\min_{z \in \partial{\Omega}} \big(g - p_k\big)(z) +  p_k(x)
		\ge g(x_k) - p_k(x_k) \ge g(x_k),
		\end{align*}
		whence $u_*(x) \geq g(x)$. This in turn gives
		$u^* \leq g \leq u_* \leq u^*$ on $\partial\Omega$
		as asserted.

		\smallskip
		\textit{Step 4: Comparison.} To prove that $u^*=u_*$ in
		$\overline{\Omega}$ we make use of the comparison principle in \cite{NoNtZh} for \eqref{E:MAFull}. Since $u^*$ and $u_*$ are a subsolution and supersolution respectively of \eqref{E:MAFull} and they agree on the boundary, we can deduce that $u^*\le u_*$ in $\overline{\Omega}$. Combining with $u^* \ge u_*$, by
		definition, this results in $u^*=u_*$ in $\overline{\Omega}$.
		
		\smallskip 
		\textit{Step 5: Uniform Convergence.} This is identical to \cite{NoNtZh} and is thus omitted. The proof is complete.
	\end{proof}

%-----------------------------------------------------------------------------	
	\section{Conclusions}
%-----------------------------------------------------------------------------	
	In this paper we introduce two methods to solve the \MA equation \eqref{E:MA}. The first one is an accurate scheme that hinges on quadratic interpolation and a higher-order approximation of directional derivatives in \eqref{E:Det}. It exhibits errors in the $L^\infty$ norm of one to two orders of magnitude lower than the monotone operator introduced in \cite{NoNtZh}. However, formal higher order accuracy comes at the cost of monotonicity, which prevents us from proving convergence in $L^\infty$ for this operator. The second method circumvents this issue by combining the monotone and the accurate operators into a filtered scheme. This yields convergence to the viscosity solution relying on stability and monotonicity properties of the monotone operator and the fact that the filter scale $\tau\to0$ as $h\to0$. We employ two filter functions according to whether the forcing $f$ is strictly positive or degenerate. In both cases, the discrete piecewise linear solution $\uve^1\in\Vh^1$ is discretely convex. The filter detects parts of the domain where the accurate operator could under-perform due to lack of regularity of the solution, as it happens in our degenerate example, and switches to the monotone operator. We explore the two methods computationally and illustrate the enhanced performance of both schemes by comparing them with the numerical experiments from \cite{NoNtZh}. Lastly, we investigate the effect of filter function and filter scale and discuss some computational challenges of the method.
	
	%----------------------------------------------------------------------------------
	%----------------------------------------------------------------------------------
	\medskip\noindent
	{\bf Acknowledgments.}
	%----------------------------------------------------------------------------------
	We are indebted to S.W. Walker for providing assistance and guidance
	with the software FELICITY and to H. Antil for numerous discussions
	about the implementation of the method. We also thank W. Zhang for early discussions about the filter methodology.

	%%%%%%%%%%%%%%%%%%%%%%%%%%%%%%%%%%%%%%%%%%%%%%%%%%%%%%%%%%%%%%%%%%%%%%%%%%%%%%%%%%%%
	\bibliographystyle{amsplain}

\begin{thebibliography}{99}
		
		\bibitem{AguileraMorin09}
		{\sc N.E. Aguilera and P.Morin},
		{\em On convex functions and the finite element method},
		SIAM J. Numer. Anal., 47(4):3139-3157, 2009. 
		
		
		\bibitem{Aw}
		{\sc G. Awanou}, {\em Convergence rate of a stable, monotone and consistent scheme for the Monge-Amp\`ere equation}, Symmetry, 2016. 
		
		
		\bibitem{BaSoug}
		{\sc G. Barles and P. Souganidis}, {\em Convergence of approximation schemes for linear second order equations}, Asymptotic Anal., 4(3):271-283, 1991.
		
		
		
		\bibitem{BeCoMi}
		{\sc  J-D. Ben\`amou, F. Collino, J-M. Mirebeau}, {\em Monotone and consistent discretization of the Monge-Amp\`ere operator}, Math. Comp., 85:2743-2775, 2016.
		
		
		\bibitem{BeFrOb}
		{\sc  J-D. Ben\`amou, B. Froese and A. Oberman}, {\em Two numerical methods for the elliptic Monge-Amp\`ere equation}, ESAIM: M2AN, Volume 44 (2010), Number 4, pp 737-758. 
		
		\bibitem{BeNeOz}
		{\sc D. P. Bertsekas, A. Nedic, A. E. Ozdaglar}, {\em Convex Analysis and Optimization}, Athena Scientific, 2003. 
		
		\bibitem{BoFalSa}
		{\sc  O. Bokanowski, M. Falcone, S. Sahu}, {\em An efficient filtered scheme for some first order time-dependent Hamilton-Jacobi equations}, SIAM J. Sci. Comput., 38(1):171-195, 2016.
		
		
		\bibitem{BrenScott}
		{\sc S. C. Brenner, R. Scott},{\em The Mathematical Theory of Finite Element Methods}, Springer, 2008.
		
		
		\bibitem{BrenNeil}
		{\sc S. C. Brenner, T. Gudi, M. Neilan, and L-Y. Sung.},{\em $C^0$ penalty methods for the fully nonlinear Monge-Amp\`ere equation}, Math. Comp., 80(276):1979–1995, 2011. 
		
		\bibitem{Ciarlet}
		{\sc P. G. Ciarlet}, {\em The Finite Element Method for Elliptic Problems}, Soc. for Industrial and Applied Math., Philadelphia, PA, USA, 2002.
		
		
		
		\bibitem{CNQ}
		{\sc  X. Chen, Z. Nashed, L. Qi}, {\em Smoothening methods and semismooth methods for nondifferentiable operator equations},  SIAM J. Numer. Anal. ,Vol. 38, No. 4,  1200-1216, 2000. 
		
		\bibitem{DeGl1}
		{\sc E. J. Dean and R. Glowinski}, {\em  An augmented Lagrangian approach to the numerical solution of the Dirichlet problem for the elliptic Monge-Amp\`ere equation in two dimensions}, Electron. Trans. Numer. Anal., 22 , pp. 71-96, 2006 
		
		\bibitem{DeGl2}
		{\sc E. J. Dean and R. Glowinski}, {\em  On the numerical solution of the elliptic Monge-Amp\`ere
			equation in dimension two: A least-squares approach}, Partial Differential Equations, Comput. Methods Appl. Sci. 16, Springer, Dordrecht, pp. 43?63, 2008. 
		
		
		\bibitem{FeJe}
		{\sc X. Feng, M. Jensen }, 
		{\em Convergent semi-Lagrangian methods for the Monge-Amp\`ere equation on unstructured grids},
		SIAM J. Numer. Anal., 55(2), 691–712, 2017.
		
		\bibitem{FeGlNe}
		{\sc X. Feng, R. Glowinski and M. Neilan}, {\em Recent developments in numerical methods for fully nonlinear second order partial differential equations}, SIAM Rev., 55(2), 205-267. 
		
		
		\bibitem{FeNe1}
		{\sc X. Feng and M. Neilan}, {\em  Mixed finite element methods for the fully nonlinear Monge-Amp\`ere equation based on the vanishing moment method}, SIAM J. Numer. Anal., 47(2), 1226-1250, 2009 
		
		
		\bibitem{FeNe2}
		{\sc X. Feng and M. Neilan}, {\em  Vanishing moment method and moment solutions for fully nonlinear second order partial differential equations}, J. Sci. Comput., Volume 38, Issue 1, pp 74-98, 2009 
		
		
		\bibitem{Fr}
		{\sc B. Froese}, {\em  A numerical method for the elliptic Monge-Amp\`ere equation with transport boundary conditions}, SIAM J. Sci. Comput., 34(3), A1432?A1459, 2012.
		
		\bibitem{FrOb1}
		{\sc B. Froese and A. Oberman}, {\em  Convergent finite difference solvers for viscosity solutions of the elliptic Monge-Ampère equation in dimensions two and higher}, SIAM J. Numer. Anal., 49(4):1692-1714, 2012.
		
		\bibitem{FrOb2}
		{\sc B. Froese and A. Oberman}, {\em  Convergent filtered schemes for the Monge-Amp\`ere partial differential equation}, SIAM J. Numer. Anal., 51(1), 423–444, 2013.
		
		\bibitem{Gl}
		{\sc R. Glowinski}, {\em  Numerical methods for fully nonlinear elliptic equations}, Proceedings of the 6th International Congress on Industrial and Applied Mathematics, R. Jeltsch and G.
		Wanner, eds., ICIAM 07, Invited Lectures, pp. 155--192, 2009 
		
		\bibitem{Gut}
		{\sc C. Guti\'errez}, {\em The Monge-Amp\`ere Equation}, 
		Birkh\"auser, 2001.
		
		\bibitem{HIK}
		{\sc M. Hinterm\"uller, K. Ito, K. Kunisch}, {\em The primal-dual active set strategy as a semismooth Newton method}, SIAM J. Optim., 13(3), 865–888, 2003. 
		
		\bibitem{IL}
		{\sc H. Ishii, P. L. Lions}, {\em Viscosity solutions of fully
			nonlinear second-order elliptic partial differential
			equations}, J. Diff. Eqs. 83 (1):26-78, 1990.
		
		\bibitem{LiNo}
		{\sc W. Li, R. H. Nochetto}, {\em Optimal pointwise error estimates for two-scale methods for the Monge-Ampe\`ere equations}, (submitted).
		
		\bibitem{Mir}
		{\sc  J-M. Mirebeau}, {\em Discretization of the 3D \MA operator, between wide stencils and power diagrams}, arXiv:1503.00947, 2014 
		
		\bibitem{NoNtZh}
		{\sc R. H. Nochetto, D. Ntogkas, W. Zhang}
		{\em Two-scale method for the \MA equation: convergence to the viscosity solution}, Math. Comp. (to appear).
		
		
		\bibitem{NoNtZh2}
		{\sc R. H. Nochetto, D. Ntogkas, W. Zhang}
		{\em Two-scale method for the Monge-Amp\`ere equation: pointwise error estimates}, 2017 (submitted)	
		

		
		\bibitem{NoZh}
		{\sc R. H. Nochetto and W. Zhang}, {\em Discrete ABP estimate and
			convergence rates for linear elliptic equations in non-divergence
			form}, Found. Comp. Math. (to appear). 
		
		\bibitem{NoZh2}
		{\sc R. H. Nochetto and W. Zhang}, {\em Pointwisw rates of convergence
			for the Oliker-Prussner methid for the Monge-Amp\`ere equation}, (submitted). 
		
		
		\bibitem{Ob1}
		{\sc A. Oberman}, {\em Convergent difference schemes for nonlinear elliptic and parabolic equations: Hamilton-Jacobi equations and free boundary problems}, SIAM J. Numer. Anal, Vol 44 (2006) No. 2 pp. 879-895.
		
		\bibitem{ObSal}
		{\sc A. Oberman, T. Salvador}, {\em Filtered schemes for Hamilton-Jacobi equations: A simple
			construction of convergent accurate difference schemes}, J. Comput. Phys., 284,
		pp. 367–388, 2015.
		
		\bibitem{OlPr}
		{\sc V. I. Oliker and L. D. Prussner}, {\em On the numerical solution of the equation $(\partial^2 z/\partial x^2)(\partial^2z/\partial y^2)-(\partial^2z/\partial x \partial y)^2 = f$ and its discretizations}, I. Numer. Math., 54(3):271- 293, 1988.
		
		\bibitem{Wachsmuth:16}
		{\sc G. Wachsmuth},
		{\em Conforming approximation of convex functions with the finite
			element method}, Numer. Math., 137(3):741-772, 2017.
		
		\bibitem{Walker}
		{\sc S.~W. Walker}, {\em {FELICITY}: {F}inite {EL}ement {I}mplementation and
			{C}omputational {I}nterface {T}ool for {Y}ou}.
		\newblock http://www.mathworks.com/matlabcentral/fileexchange/31141-felicity.
		
		
	\end{thebibliography}
	%%%%%%%%%%%%%%%%%%%%%%%%%%%%%%%%%%%%%%%%%%%%%%%%%%%%%%%%%%%%%%%%%%%%%%%%%%%%%%%%%%%%

	%%%%%%%%%%%%%%%%%%%%%%%%%%%%%%%%%%%%%%%%%%%%%%%%%%%%%%%%%%%%%%%%%%%%%%%%%%%%%%%%%%
\end{document}